\newtheorem{thm}{Theorem}[section]
\newtheorem{lem}[thm]{Lemma}
\newtheorem{re}[thm]{Remark}
\newtheorem{prop}[thm]{Proposition}
\newtheorem{exa}[thm]{Example}
\newcommand{\field}[1]{\mathbb{#1}}
\newcommand{\R}{\field{R}}
\newcommand{\N}{\field{N}}
\newcommand\scalemath[2]{\scalebox{#1}{\mbox{\ensuremath{\displaystyle #2}}}}
\newcommand{\diam}{\mathrm{diam}}
\title{\bf \Large A Hoffman's Theorem: a revisit with new discovery}
\author{
{\small  Jianfeng Wang$^{a,}$\footnote{Corresponding author.
\newline{\it \hspace*{5mm}Email addresses:} jfwang@sdut.edu.cn (J. Wang), jwang66@aliyun.com (J. Wang), maurizio.brunetti@unina.it (M. Brunetti).}\;,\;\ \ Jing Wang$^b$, \ \ Maurizio Brunetti$^c$}\\[2mm]
\footnotesize $^a$School of Mathematics and Statistics, Shandong University of Technology, Zibo 255049, China\\
\footnotesize $^b$Department of Applied Mathematics, Northwestern Polytechnical University, Xi'an, 710072, China\\
\footnotesize $^c$Department of Mathematics and Applications, University of Naples `Federico II', Italy}
\date{ }
\begin{document}
\maketitle
\begin{abstract}
In 1972, A. J. Hoffman proved his celebrated theorem concerning  the limit points of spectral radii of non-negative symmetric integral matrices less than $\sqrt{2+\sqrt{5}}$. In this paper, after giving a new version of Hoffman's theorem, we get  two generalized versions of it applicable to non-negative symmetric matrices with fractional elements. As a corollary, we obtain another alternative version about the limit points of spectral radii of (signless) Laplacian matrices of graphs less than $2+ {\tiny \frac{{\;}1{\;}}{3}\left((54 - 6\sqrt{33})^{\frac{{\;}1{\;}}{3}} + (54 + 6\sqrt{33})^{\frac{{\;} 1{\;}}{3}} \right)}$. We also discuss how our approach could be fruitfully employed  to investigate equiangular lines.\\

\noindent {\it AMS classification:} 05C50\\[1mm]
\noindent {\it Keywords}: Hoffman Limit point; Graphs matrices; Spectral radius.
\end{abstract}

\baselineskip=0.2in

\section{Introduction}
 A. J. Hoffman determined in  \cite{hoffman}  the limit points of spectral radii of non-negative symmetric integral matrices less than
$\sqrt{2+\sqrt{5}}$. His strategy essentially consisted in reducing the problem to the adjacency matrices of graphs. Hoffman's result, which is Theorem~\ref{hoffman} in this paper, pioneered a fruitful investigation on limit points of graphs eigenvalues. For instance, Shearer \cite{she} proved that all numbers greater than $2+\sqrt{5}$ are limit points of spectral radii of adjacency matrices, and many results in the same vein were obtained later on concerning the eigenvalues of various (di)graph matrices, including the adjacency and the (signless) Laplacian matrix (see, for instance, \cite{doob-lim,hof-least,zhang-chen}).

Meanwhile, the graphs with adjacency spectral radius at most $\sqrt{2+\sqrt{5}}$
has been gradually characterized. Smith \cite{smith} determined all graphs whose adjacency spectral radius doest not exceed $2$. Brouwer and Neumaier \cite{BN} identified  all graphs whose spectral radii of adjacency matrices are between 2 and $\sqrt{2+\sqrt{5}}$ completing a research project started by Cvetkovi\'c, Doob and Gutman \cite{CDG}. The above investigation has been known as {\em the Hoffman program} of graphs with respect to the adjacency matrix. For more details on the Hoffman program of graphs, see \cite[Section~1.3.3]{S}, \cite[Section~3.3]{BCKW} and a new survey \cite{JFW-JW-MB}.

Surprisingly, Hoffman's theorem and the results stated above turned out useful to study equiangular lines  in the $n$-dimensional Euclidean space, i.e\ a family of lines through the origin such that the angle between any pair of them is the same \cite{jiang-poly}.

In this paper we provide a new version of Hoffman's theorem and two generalized results of it. Hoffman's result concerns non-negative symmetric integral matrices. Our generalizations, stated in Section~2, apply to certain non-negative symmetric  matrices with fractional elements. Hoffman proved his theorem by showing that he could restrict to the set of the matrices involved to the $(0,1)$ symmetric matrices with null main diagonal, i.e.\ to the
adjacency matrices of graphs. The strategy proposed here is to use the convex linear combination of adjacency matrix and degree matrix of graphs and the software {\em Mathematica\textsuperscript{\textregistered}} plays a pivotal role in some proofs.

Let $G= (V(G),E(G))$ be a simple and undirected graph with vertex set $V(G) = \{v_1, \dots, v_n\}$ and edge set $E(G)$.  The well-known {\it adjacency matrix}, denoted by $A(G) = (a_{ij})_{n
\times n}$, is the $(0,1)$-symmetric matrix with $a_{ij}=1$ if $v_iv_j \in E(G)$ and $a_{ij}=0$ otherwise. For the vertex $v \in V(G)$, $d(v)$
denotes the degree of a vertex $v \in V(G)$, and $D(G)={\rm diag}(d(v_1),d(v_2),\cdots,d(v_n))$ is the degree matrix of $G$. The {\it Laplacian}
and the {\it signless Laplacian} matrices are respectively defined as $L(G) = D(G) - A(G)$ and $Q(G) = D(G) + A(G)$. Generally, the Laplacian matrix stems from the
celebrated Kirchhoff's matrix tree theorem  which is very recently generalized to directed and weighted graphs by Leenheer \cite{lee}. Surely,
these three ones stand among the most widely studied graph matrices.

\medskip
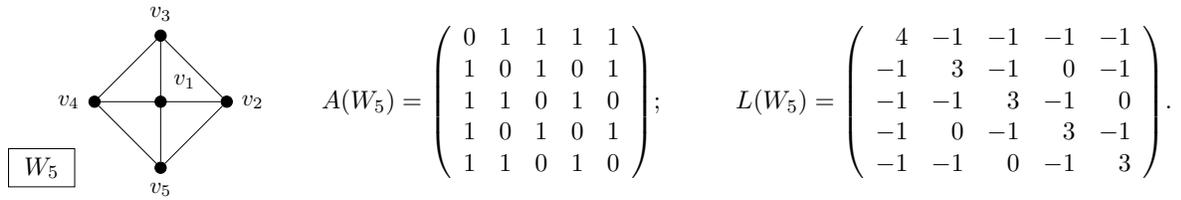
\begin{figure}[h!]
\begin{center}
 \resizebox{1\textwidth}{!}
{\begin{tikzpicture}[vertex1_style/.style={circle,draw,minimum size=0.17 cm,inner sep=0pt, fill=black},vertex2_style/.style={circle,draw,minimum size=0.2 cm,inner sep=0pt}, nonterminal/.style={
rectangle,
minimum size=2mm,
thin,
draw=black,
top color=white, 
bottom color=white!50!white!50, 
font=\itshape
}]


    \node[vertex1_style, label=above right:\small$v_1$] (a0) at  (1,1) {};
     \node[vertex1_style, label=right:\small$v_2$] (a1) at  (2,1) {};
      \node[vertex1_style, label=above:\small$v_3$] (a2) at  (1,2) {};
       \node[vertex1_style, label=left:\small$v_4$] (a3) at  (0,1) {};
              \node[vertex1_style, label=below:\small$v_5$] (a4) at  (1,0) {};
           \draw (a1)--(a3);
            \draw (a2)--(a4);
             \draw (a2)--(a3);
             \draw (a4)--(a3);
               \draw (a1)--(a2);
              \draw (a1)--(a4);
\node[nonterminal]   (Z05) at (-.8,0) {$\; W_5 \;$};
\node      (a)         at (6,1) {$A(W_5)=\left(
  \begin{array}{ccccc}
    0 & 1 & 1 & 1 & 1 \\
    1 & 0 & 1 & 0 & 1 \\
    1 & 1 & 0 & 1 & 0 \\
    1 & 0 & 1 & 0 & 1 \\
    1 & 1 & 0 & 1 & 0 \\
  \end{array}
  \right)$;};
  \node      (b)         at (13,1) {$L(W_5)=\left(
  \begin{array}{ccccc}
    \phantom{-}4 & -1 & -1 & -1 & -1 \\
    -1 & \phantom{-}3 & -1 & \phantom{-}0 & -1 \\
    -1 & -1 & \phantom{-}3 & -1 & \phantom{-}0 \\
    -1 & \phantom{-}0 & -1 & \phantom{-}3 & -1 \\
    -1 & -1 & \phantom{-}0 & -1 &\phantom{-}3 \\
  \end{array}
  \right)$.};
\end{tikzpicture} }
\end{center}
\vspace{-6mm}
  \caption{ \label{fig3}  \small  Adjacency and Laplacian matrix of the wheel graph $
 W_5$,  }
  \end{figure}

Let $M = M(G)$ any matrix associated with $G$. The {\it $M$-polynomial} of $G$ is defined as {\rm det}$(\lambda{I} - M)$, where $I$ is the identity matrix. The {\it
$M$-spectrum} of $G$ is a multiset consisting of the eigenvalues of its graph matrix $M$, and the largest absolute value of them is called the {\it
$M$-spectral radius of $G$}. We denote it by $\rho\!_{_M}\!(G)$.

In order to state the celebrated theorem by Hoffman, we recall that a real number $\gamma(M)$ is said to be a {\it limit point} of the  $M$-spectral radius of graphs -- or simply an {\it $M$-limit point} -- if there exists a sequence of graphs $\{G_k\, |\, k\in \mathbb{N}\}$ such that  $$\rho\!_{_M}\!(G_i) \neq \rho\!_{_M}\!(G_j) \quad \text{whenever  $i \neq j$}, \quad \text{and}  \quad \lim_{k \rightarrow \infty}\rho\!_{_M}\!(G_k)=\gamma(M).$$

\begin{thm}[Hoffman's theorem]\label{hoffman} Let $\tau$ denote the number $(\sqrt{5}+1)/2$. For $n \in \N$, let $\eta_n=\beta_n^{\frac{1}{2}}+\beta_n^{-\frac{1}{2}}$, where
$\beta_n$ is the positive root of
\begin{equation}\label{puah0}
\phi_n(x)=x^{n+1}-(1+x+x^2+\cdots+x^{n-1}).
\end{equation} The numbers $2=\eta_1<\eta_2<\cdots$ are precisely the limit points of the $A$-spectral radius of graphs smaller than
$$\lim_{n\rightarrow \infty}\eta_n = \tau^{\frac{1}{2}}+\tau^{-\frac{1}{2}}= \sqrt{2+\sqrt{5}}.$$
\end{thm}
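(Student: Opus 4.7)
The plan is to carry out two complementary arguments: \emph{(i)} exhibit, for each $n \geq 1$, an explicit sequence of graphs $\{G_{n,k}\}_{k \in \N}$ whose $A$-spectral radii strictly increase to $\eta_n$; and \emph{(ii)} prove that no number in $(2,\sqrt{2+\sqrt{5}})$ outside the list $\{\eta_n\}_{n\geq 1}$ is a limit point of $A$-spectral radii. Step \emph{(i)} yields the inclusion ``$\supseteq$'' of the set of limit points below the threshold, and step \emph{(ii)} the reverse inclusion. Along the way we must also verify $\eta_1 = 2$, the strict monotonicity of $(\eta_n)$, and the identity $\lim_n \eta_n = \sqrt{2+\sqrt{5}}$.

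For step \emph{(i)}, I would employ caterpillar-type trees obtained by attaching a pendant path of length $k$ to a fixed small ``gadget'' depending on $n$. The adjacency characteristic polynomial admits the standard Schwenk-type decomposition $P_{G_{n,k}}(\lambda) = p_k(\lambda)f_n(\lambda) - p_{k-1}(\lambda)g_n(\lambda)$, where $p_k$ is the characteristic polynomial of the path $P_k$. After the Chebyshev-like substitution $\lambda = x^{1/2} + x^{-1/2}$, the dominant-eigenvalue condition, in the limit $k\to\infty$, collapses precisely to $\phi_n(\beta) = 0$. Strict monotonicity of $\rho(G_{n,k})$ in $k$ is then a consequence of Perron--Frobenius, since $G_{n,k}$ embeds properly in $G_{n,k+1}$. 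The identity $\lim_n \eta_n = \sqrt{2+\sqrt{5}}$ is read off from $(x-1)\phi_n(x) = x^{n+2} - x^{n+1} - x^n + 1$: dividing by $x^{n+2}$ and letting $n\to\infty$ forces $\beta_n \to \tau$, the golden ratio, so that $\eta_n \to \tau^{1/2} + \tau^{-1/2} = \sqrt{2+\sqrt{5}}$. The strict monotonicity of $(\eta_n)$ follows from an elementary comparison of the positive roots of consecutive $\phi_n$.

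For step \emph{(ii)}, suppose $\gamma \in (2,\sqrt{2+\sqrt{5}})$ is a limit point witnessed by an infinite family $\{G_k\}$ with pairwise distinct spectral radii tending to $\gamma$. Cauchy interlacing, combined with Smith's classification of graphs with $\rho \leq 2$ and the threshold $\sqrt{2+\sqrt{5}}$, bounds the maximum degree of each $G_k$ and the number of vertices of degree $\geq 3$. Passing to a subsequence, all $G_k$ share a common ``core'' and differ only by the lengths of certain attached pendant paths, which must tend to infinity. Applying again the substitution $\lambda = x^{1/2} + x^{-1/2}$ to the limiting eigenvalue equation (derived via the same Schwenk recurrence) produces $\phi_n(\beta) = 0$ for some $n$, and hence $\gamma = \eta_n$.

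The main obstacle lies in the structural reduction underlying step \emph{(ii)}: turning an arbitrary infinite family with $\sup_k \rho(G_k) < \sqrt{2+\sqrt{5}}$ into one that, after relabeling and pruning short branches, matches one of the caterpillars $G_{n,k}$. This demands a careful case analysis of ``small'' induced subgraphs forbidden by the Smith-type bounds, control of how the Perron eigenvector behaves on long pendant paths, and a compactness step identifying the limiting combinatorial skeleton. The substitution $\lambda = x^{1/2} + x^{-1/2}$ is the common algebraic thread: it linearises path-length recurrences and makes $\phi_n$ arise inevitably as the equation satisfied by any limit point below the threshold.
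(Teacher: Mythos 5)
Your overall route is the same one the paper follows: it derives Theorem~\ref{hoffman} as the $\alpha=0$ specialization of Theorems~\ref{Aa-main1}--\ref{Aa-main2} (via $\Phi_{n,0}(x)=-x^{n+1}\phi_n(1/x)$), and the Section~4 proof of those theorems is exactly your two-step scheme: the limit points are realized by the trees $P_2(P_m,P_n)$ of Fig.~\ref{fig0} (a degree-$3$ vertex with a pendant edge, one ray of fixed length $n$ and one growing ray), and the substitution $\lambda=\theta+\theta^{-1}$ (in the paper, $\lambda=(1-\alpha)\theta+(1-\alpha)\theta^{-1}+2\alpha$) turns the limiting eigenvalue equation into $\phi_n(\beta)=0$. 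Your step \emph{(i)}, the monotonicity of $(\eta_n)$, and the computation $\beta_n\to\tau$, $\eta_n\to\sqrt{2+\sqrt5}$ are all sound at the level of detail given.

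The genuine gap is step \emph{(ii)}, which you yourself flag as ``the main obstacle'' and then only restate rather than prove: the phrase ``a compactness step identifying the limiting combinatorial skeleton'' is precisely the content that has to be supplied, and it does not follow from Cauchy interlacing plus Smith's classification alone. Concretely, the paper needs (and you would need) the following chain of lemmas, none of which appears in your sketch: (a) a connected graph that is neither a tree nor a cycle has $\rho_A\geqslant\sqrt{2+\sqrt5}$ (proved via the cycle-plus-pendant-edge graphs $L_n$, which contain $P_2(P_{\lfloor (n-2)/2\rfloor},P_{\lfloor (n-2)/2\rfloor})$, together with the fact that subdividing an internal path \emph{decreases} the spectral radius, Lemma~\ref{alpha-internal}); (b) the reduction from $\Delta\leqslant 4$ (which your $\sqrt{\Delta}\leqslant\rho$ argument gives) to $\Delta=3$, using unbounded diameter and $\lim_n\rho_{A_0}(K_{1,3}(P_n))=3/\sqrt2>\sqrt{2+\sqrt5}$; (c) exclusion of trees with three or more degree-$3$ vertices (Lemma~\ref{gnam3}) and of two degree-$3$ vertices at bounded distance (the $T_{m,n}\supset P_2(P_{n-m},P_{n-m})$ argument), while two degree-$3$ vertices drifting apart are handled by the splitting Lemma~\ref{alpha-gxy}, not by ``passing to a common core''; and (d) the fact that the surviving degree-$3$ vertex must carry a pendant edge, because otherwise the graphs contain arbitrarily long $(P_5)_u(P_h)$ and $\lim_h\rho_{A_0}((P_5)_u(P_h))$ equals $\sqrt{2+\sqrt5}$ exactly, contradicting the strict bound. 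Point (d) is delicate precisely because the forbidden configuration's limit coincides with the threshold rather than exceeding it, so a blanket ``Smith-type bound'' cannot settle it. Until these structural lemmas are actually proved, your argument establishes only the inclusion ``each $\eta_n$ is a limit point'' and leaves the converse -- the heart of Hoffman's theorem -- unproven.
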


The new and generalized versions of Theorem~\ref{hoffman} involve the {\it $A_{\alpha}$-matrix}  of a graph $G$ (see \cite{niki}), i.e\  a convex linear combination $A_{\alpha}(G)=\alpha D(G)+(1-\alpha)A(G)$, where $\alpha \in [0,1]$. Clearly, $A(G) = A_0(G), Q(G)=2A_{1/2}(G)$ and $L(G)=\frac{1}{\alpha-\beta}(A_\alpha(G)-A_\beta(G))$ for all $\alpha \not=\beta$. The $A_{\alpha}$-matrix is non-negative symmetric with fractional elements if $\alpha \in (0,1)$. For $\alpha \not=1$,  the spectral theory of $A_{\alpha}$ is equivalent to the one arising from the {\it general matrix} $M_{\gamma}$, defined in \cite{LHGL} as
$ M_{\gamma} (G) = \gamma D(G) + A(G)$ for every $\gamma \geq 0$, the matrices $A_{\alpha}(G)$ and $M_{\gamma} (G)$ being proportional. 
 Note that the $A_{\alpha}$-matrix of a connected graph is non-negative irreducible. By the Frobenius-Perron Theorem \cite[Theorem 8.4.4, pp. 508]{horn-john}, the $A_\alpha$-spectral radius is an algebraically simple eigenvalue of $A_{\alpha}$-matrix associated with a positive eigenvector.\medskip

\begin{exa} For the graph $W_5$ in Figure 1, set $\alpha=\frac{\;1\;}{3}$ and $\alpha=\frac{\;3\;}{4}$. The corresponding $A_\alpha$-matrices of $G$ take the following form.
$$
A_{\frac{1}{3}}(G)=\left(
  \begin{array}{ccccc}
    \frac{4}{3} & \frac{2}{3} & \frac{2}{3} & \frac{2}{3} & \frac{2}{3} \\[1.1mm]
    \frac{2}{3} & 1           & \frac{2}{3} & 0           & \frac{2}{3} \\[1.1mm]
    \frac{2}{3} & \frac{2}{3} & 1           & \frac{2}{3} & 0 \\[1.1mm]
    \frac{2}{3} & 0           & \frac{2}{3} & 1           & \frac{2}{3} \\[1.1mm]
    \frac{2}{3} & \frac{2}{3} & 0           & \frac{2}{3} & 1 \\[1.1mm]
  \end{array}
  \right)
\quad \mbox{and} \quad
A_{\frac{3}{4}}(G)=\left(
  \begin{array}{ccccc}
    3           & \frac{1}{4} & \frac{1}{4} & \frac{1}{4} & \frac{1}{4} \\[1.1mm]
    \frac{1}{4} & \frac{9}{4} & \frac{1}{4} & 0           & \frac{1}{4} \\[1.1mm]
    \frac{1}{4} & \frac{1}{4} & \frac{9}{4} & \frac{1}{4} & 0  \\[1.1mm]
    \frac{1}{4} & 0           & \frac{1}{4} & \frac{9}{4} & \frac{1}{4} \\[1.1mm]
    \frac{1}{4} & \frac{1}{4} & 0           & \frac{1}{4} & \frac{9}{4} \\[1.1mm]
  \end{array}
  \right).
$$
It turns out that $\rho_{A_{\frac{1}{3}}}(G) = (11+\sqrt{73})/6$, and $\rho_{A_{\frac{3}{4}}}(G) =(23+\sqrt{17})/8$.
\end{exa}

A first attempt to study limit points of the $A_{\alpha}$-spectral radius of graphs has been performed in \cite{WWXB}, where it has been shown that the smallest limit point for the $A_{\alpha}$-spectral radius of graphs is $2$. Results in this paper are a natural second step. In fact we identify all the possibile smallest $A_{\alpha}$-limit points which are bigger than $2$. The $A_{\alpha}$-matrix really merges the adjacency and the signless Laplacian spectral properties. Infact, we shall be able to deduce both Theorem~\ref{hoffman} and Theorem~\ref{ult} from a single statement.

The remainder of the paper is structured as follows. Section~2 contains the statements of our main results. Section~3 contains some technical results and preliminaries needed for the proofs of  Theorems \ref{Aa-main2} and \ref{Aa-main1}, presented in Section~4. Section 5 contains a re-formulation of the known results on the limits points on  (signless) Laplacian spectral radius of graphs.  In the concluding Section 6 we  propose two problems to be addressed in future studies and discuss the potential applications to the problem of estimating the maximum number of equiangular lines in an $n$-dimensional Euclidean space.

\section{New discoveries}

Let $P_2(P_m,P_n)$ be the graph depicted in Fig.~\ref{fig0}, where $P_n$ denotes the {\it path} of order $n$. From \cite[Proposition~3.6]{hoffman} and Proposition~\ref{alpha-p2pnpn} in Section 3, it follows that
\begin{equation}\label{psi}
\Psi(\alpha) = \lim\limits_{n\rightarrow\infty} \rho_{_{A_{\alpha}}} (P_2(P_n,P_n))
\end{equation}
is a well-defined strictly increasing continuous function $\Psi : [0,1] \longrightarrow \R$, such that
$$ \Psi(0) = \sqrt{2+\sqrt{5}} \qquad \text{and} \qquad  \Psi(1) = 3.$$
A closed-form expression for $\Psi(\alpha)$ is determined in \eqref{PSI}.

\medskip
\begin{figure}[h]
\begin{center}
 \resizebox{0.35\textwidth}{!}
{\begin{tikzpicture}[vertex1_style/.style={circle,draw,minimum size=0.17 cm,inner sep=0pt, fill=black},vertex2_style/.style={circle,draw,minimum size=0.2 cm,inner sep=0pt}]

\draw [decorate,decoration={brace,amplitude=5pt,mirror,raise=2ex}]
  (3.9,0) -- (7.6,0) node[midway,yshift=-2em]{\scriptsize$n$};
  \draw [decorate,decoration={brace,amplitude=5pt,raise=-1ex}]
  (3.9,1.5) -- (7.6,1.5) node[midway,yshift=.8em]{\scriptsize$m$};
 \node[vertex1_style, ] (a1) at  (4,1) {};
  \node[vertex1_style,] (a2) at  (5,1) {};
    \node[vertex1_style, ] (a3) at  (6.5,1) {};
        \node[vertex1_style, ] (a4) at  (7.5,1) {};
                \node[vertex1_style, ] (b1) at  (4,0) {};
                           \node[vertex1_style, ] (b2) at  (5,0) {};
                                   \node[vertex1_style, ] (b3) at  (6.5,0) {};
                                   \node[vertex1_style] (b4) at (7.5,0)  {};
                                        \node[vertex1_style] (c0) at (3.5,.5)  {};
                                         \node[vertex1_style] (c1) at (2.5,.5)  {};
                                        \draw (a1)--(a2);
                                             \draw (a3)--(a4);
                                               \draw (b1)--(b2);
                                             \draw (b3)--(b4);
                                               \draw (c0)--(c1);
                                               \draw (c0)--(a1);
                                                \draw (c0)--(b1);
                                                 \draw[thick, dotted] (a2)--(a3);
                                                   \draw[thick, dotted] (b2)--(b3);
\end{tikzpicture} }
\end{center}
  \caption{ \label{fig0}  \small The graph $P_2(P_m,P_n)$}
  \end{figure}
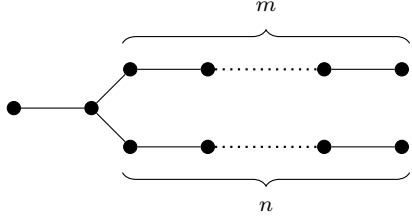

In Section 4, we provide a proof of  Theorems \ref{Aa-main1} and \ref{Aa-main2} below. Such theorems are essentially equivalent, and  generalize Theorem~\ref{hoffman}, which is deducibile from them by setting $\alpha =0$ in their statements. From Theorems \ref{Aa-main1} and \ref{Aa-main2}, we also obtain Theorem~\ref{hoff-new} which can be regarded as a new version of the original theorem by Hoffman.

\begin{thm}[Generalized version-I of Hoffman's theorem]\label{Aa-main1}
For every $\alpha \in [0,1)$ and any non-negative integer $n$,
let
\begin{equation}\label{eq1}
\eta_n(\alpha)=2\alpha+(1-\alpha)\gamma_n(\alpha)^{\frac{1}{2}}+(1-\alpha)\gamma_n(\alpha)^{-\frac{1}{2}},
\end{equation} where $\gamma_0(\alpha)=1$,
$\gamma_1(\alpha) \in (0,1)$ is the only positive root of
\[  \Phi_{1,\alpha}(x) = (1-\alpha)^2 x^2 +2\alpha(1-\alpha) x^{\frac{3}{2}}+\alpha^2x-(1-\alpha)^2, \]
and, for $n \geqslant 2$, $\gamma_n(\alpha) \in (0,1)$ is the  only positive root of
{\small
\begin{equation}\label{PHI} \Phi_{n,\alpha}(x)=(1-\alpha)^2x^{n+1}+2\alpha(1-\alpha)\sum_{i=0}^{n-1}x^{n-i+\frac{1}{2}}+(1-2\alpha+2\alpha^2)\sum_{i=0}^{n-2}x^{i+2}+\alpha^2x-(1-\alpha)^2.
\end{equation}
}
 Then, $$2=\eta_0(0)=\eta_1(0)<\eta_2(0)< \cdots,$$ and
$$ 2=\eta_0(\alpha)<\eta_1(\alpha)<\eta_2(\alpha)<\cdots \qquad (\!\text{ for $\alpha \in (0,1)$})$$
are the all possible  limit points of $A_\alpha$-spectral radius of graphs smaller than $\lim_{n\rightarrow\infty}\eta_n(\alpha)=\Psi(\alpha)$, where $\Psi(\alpha)$ is defined in \eqref{psi}.
\end{thm}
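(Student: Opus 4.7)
The plan is to follow Hoffman's classical strategy, adapted to the $A_\alpha$-setting, in three stages: (i) exhibit, for each $n$, a concrete graph family whose $A_\alpha$-spectral radii converge to $\eta_n(\alpha)$; (ii) verify the chain $2=\eta_0(\alpha)<\eta_1(\alpha)<\cdots\to\Psi(\alpha)$, singling out the exceptional degeneracy at $\alpha=0,\,n=1$; and (iii) prove that no further accumulation point appears below $\Psi(\alpha)$.

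For (i), the natural candidate is the tree family $\{P_2(P_n,P_k)\}_{k\in\N}$ with $n$ fixed, which matches the graph already appearing in the definition \eqref{psi} of $\Psi(\alpha)$. I would expand $\det(\lambda I - A_\alpha(P_2(P_n,P_k)))$ along the long pendant path of length $k$; the resulting linear recurrence in $k$ has characteristic roots $t^{\pm 1/2}$ arising from the substitution
\[
\lambda \;=\; 2\alpha + (1-\alpha)\bigl(t^{1/2}+t^{-1/2}\bigr),
\]
exactly as in the transfer-matrix analysis of the $A_\alpha$-spectrum of a path. Letting $k\to\infty$ and cancelling the dominant factor should produce an algebraic identity which, after clearing denominators, rearranges into $\Phi_{n,\alpha}(t)=0$ of \eqref{PHI}. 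A sign analysis of $\Phi_{n,\alpha}$ at $x=0$ and $x=1$, together with a derivative check on $(0,1)$, will then exhibit $\gamma_n(\alpha)$ as the unique positive root in $(0,1)$.

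For (ii), the strict inequality $\eta_n(\alpha)<\eta_{n+1}(\alpha)$ reduces to $\gamma_n(\alpha)<\gamma_{n+1}(\alpha)$, because the map $t\mapsto 2\alpha+(1-\alpha)(t^{1/2}+t^{-1/2})$ is strictly decreasing on $(0,1)$. Comparing the defining polynomials term by term should yield $\Phi_{n+1,\alpha}(\gamma_n(\alpha))<0$, forcing the root of $\Phi_{n+1,\alpha}$ to lie strictly to the right of $\gamma_n(\alpha)$. Convergence $\eta_n(\alpha)\to\Psi(\alpha)$ then follows either from Proposition~\ref{alpha-p2pnpn} or by identifying the formal limit of \eqref{PHI} as $n\to\infty$ with the algebraic relation \eqref{PSI} defining $\Psi(\alpha)$. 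The coincidence $\eta_0(0)=\eta_1(0)=2$ must be noted separately, since $\Phi_{1,0}(x)=x^2-1$ gives $\gamma_1(0)=1$, whereas for every $\alpha\in(0,1)$ the sign analysis returns $\gamma_1(\alpha)\in(0,1)$ strictly.

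Step (iii) is the principal obstacle. One must show that any family $\{G_k\}$ whose pairwise distinct $A_\alpha$-spectral radii converge to some $\gamma\in[2,\Psi(\alpha))$ is, after passing to a subsequence and deleting inessential vertices, eventually of the form $P_2(P_n,P_k)$ for a fixed $n=n(\gamma)$. The key ingredient is a forbidden-substructure analysis: if $G$ contains any configuration outside a narrow list of tree shapes (essentially starlike trees with a single ``heavy'' vertex and two long paths), then a well-chosen induced subgraph together with a Rayleigh-quotient estimate forces $\rho_{A_\alpha}(G)\geq\Psi(\alpha)$, violating the hypothesis. The fractional nature of the $A_\alpha$-matrix complicates matters here, because Hoffman's monotonicity arguments have to be revisited: attaching a pendant edge shifts diagonal entries, so the lower bounds must be made term by term, and this is the step at which the authors evidently invoke symbolic verification with \emph{Mathematica}. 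Once the structural reduction is complete, $\gamma$ is constrained to satisfy $\Phi_{n(\gamma),\alpha}$ under the above substitution, and therefore equals $\eta_{n(\gamma)}(\alpha)$, completing the classification.
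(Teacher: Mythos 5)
Your stage (i) and the convergence claim in (ii) do follow the paper's route (limiting equation for $P_2(P_n,P_k)$ as $k\to\infty$, the substitution $\lambda=2\alpha+(1-\alpha)(t^{1/2}+t^{-1/2})$, a sign analysis of $\Phi_{n,\alpha}$ on $(0,1)$, and Proposition~\ref{alpha-p2pnpn} for $\eta_n(\alpha)\to\Psi(\alpha)$). But the monotonicity step in (ii) is stated backwards. Since $t\mapsto t^{1/2}+t^{-1/2}$ is strictly \emph{decreasing} on $(0,1)$, the inequality $\eta_n(\alpha)<\eta_{n+1}(\alpha)$ is equivalent to $\gamma_n(\alpha)>\gamma_{n+1}(\alpha)$, not $\gamma_n(\alpha)<\gamma_{n+1}(\alpha)$, and correspondingly what must be shown is $\Phi_{n+1,\alpha}(\gamma_n(\alpha))>0$, which (as $\Phi_{n+1,\alpha}$ is increasing on $(0,\infty)$) pushes its unique positive root to the \emph{left} of $\gamma_n(\alpha)$. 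Your asserted inequality $\Phi_{n+1,\alpha}(\gamma_n(\alpha))<0$ is false. The term-by-term idea is nevertheless salvageable: from \eqref{PHI} one computes $\Phi_{n+1,\alpha}(x)-\Phi_{n,\alpha}(x)=x^{n+1}\bigl((1-\alpha)x^{1/2}+\alpha\bigr)^2>0$ for all $x>0$, whence $\Phi_{n+1,\alpha}(\gamma_n(\alpha))>0$ and $\gamma_{n+1}(\alpha)<\gamma_n(\alpha)$; with the signs corrected this is actually cleaner than the paper's own argument, which compares $\Phi_{n+1,\alpha}(x)$ with $x\Phi_{n,\alpha}(x)$ and resorts to an iteration-and-contradiction step because that difference has no constant sign on $(0,1)$.

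The serious gap is stage (iii), which is where the substance of the theorem lies and which your proposal only names. To show that a sequence with limit below $\Psi(\alpha)$ eventually reduces to graphs $P_2(P_m,P_n)$ with $n$ bounded, the paper runs a chain of concrete exclusions, each resting on a specific auxiliary family and a limit comparison: connected graphs that are neither trees nor cycles satisfy $\rho_{A_\alpha}\geq\Psi(\alpha)$ (Lemma~\ref{no-TC}); maximum degree at least $4$ is excluded because unbounded diameter forces arbitrarily long copies of $K_{1,3}(P_n)$, whose limit $\tfrac12\bigl(5\alpha+3\sqrt{2-4\alpha+3\alpha^2}\bigr)$ is checked to exceed $\Psi$ (explicitly at $\alpha=0$, via Mathematica in general); three vertices of degree $3$ are excluded by Lemma~\ref{gnam3}; two degree-$3$ vertices at bounded distance are excluded through an internal-path/subdivision argument, and at unbounded distance Lemma~\ref{alpha-gxy} decouples the two branch vertices so that one may pass to trees with a single degree-$3$ vertex; a degree-$3$ vertex not adjacent to a pendant vertex is excluded via $(P_5)_u(P_h)$ and Proposition~\ref{3.8}; finally both rays cannot grow simultaneously. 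None of these configurations, nor the limit-point machinery for $G_u(P_n)$ and $G_u(P_n,P_n)$ that converts ``contains arbitrarily long such copies'' into an inequality between limit points, appears in your sketch, and you do not indicate how the delicate comparisons of the type $\omega_1(\alpha)>\Psi(\alpha)$ would be certified (the paper either works at $\alpha=0$ using $\rho_{A_0}\leq\rho_{A_\alpha}$ or delegates them to symbolic/numeric computation). Asserting that ``a well-chosen induced subgraph together with a Rayleigh-quotient estimate forces $\rho_{A_\alpha}(G)\geq\Psi(\alpha)$'' does not establish the classification, so as written the proposal leaves the central step of the theorem unproved.
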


The above sequence $\{ \eta_n(\alpha) \}_{n \geqslant 0}$ of $A_{\alpha}$-limit points can be determined in an alternative way, as Theorem~\ref{Aa-main2} shows.

\begin{thm}[Generalized version-II of Hoffman's theorem]\label{Aa-main2}
For every $\alpha \in [0,1)$ and any non-negative integer $n$,
let
\begin{equation}\label{eq1}
\eta_n(\alpha)=2\alpha+(1-\alpha)\widetilde{\gamma}_n(\alpha)^{\frac{1}{2}}+(1-\alpha)\widetilde{\gamma}_n(\alpha)^{-\frac{1}{2}},
\end{equation} where $\widetilde{\gamma}_0(\alpha)=1$,
$\widetilde{\gamma}_1(\alpha) \in (1,\infty)$ is the only positive root of
\[  \widetilde{\Phi}_{1,\alpha}(x) = (1-\alpha)^2x^2-\alpha^2x-2\alpha(1-\alpha)x^{\frac{1}{2}}-(1-\alpha)^2, \]
and, for $n \geqslant 2$, $\widetilde{\gamma}_n(\alpha) \in (1,\infty)$ is the  only positive root of
{\small
\begin{equation}\label{PHI} \widetilde{\Phi}_{n,\alpha}(x)=(1-\alpha)^2x^{n+1}-\alpha^2x^n-2\alpha(1-\alpha)\sum_{i=1}^{n}x^{n-i+\frac{1}{2}}-(1-2\alpha+2\alpha^2)\sum_{i=1}^{n-1} x^{i}-(1-\alpha)^2.
\end{equation}
}
 Then, $$2=\eta_0(0)=\eta_1(0)<\eta_2(0)< \cdots,$$ and
$$ 2=\eta_0(\alpha)<\eta_1(\alpha)<\eta_2(\alpha)<\cdots \qquad (\!\text{ for $\alpha \in (0,1)$})$$
are the all possible  limit points of $A_\alpha$-spectral radius of graphs smaller than $\lim_{n\rightarrow\infty}\eta_n(\alpha)=\Psi(\alpha)$, where $\Psi(\alpha)$ is defined in \eqref{psi}.
\end{thm}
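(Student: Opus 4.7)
The plan is to deduce Theorem~\ref{Aa-main2} directly from Theorem~\ref{Aa-main1} by exploiting the evident $t \leftrightarrow 1/t$ symmetry of the defining expression for $\eta_n(\alpha)$. Since $t^{1/2}+t^{-1/2}$ is invariant under $t \mapsto 1/t$, any two positive numbers $\gamma$ and $\widetilde{\gamma}$ with $\widetilde{\gamma} = 1/\gamma$ produce the same value of $\eta_n(\alpha)$. The whole theorem will therefore follow once I prove the reciprocity $\widetilde{\gamma}_n(\alpha) = 1/\gamma_n(\alpha)$: the two sequences of candidate limit points then coincide term by term, and the conclusion of Theorem~\ref{Aa-main2} reduces to the one already established in Theorem~\ref{Aa-main1}.

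To establish the reciprocity, I would verify the polynomial identity
\[
\widetilde{\Phi}_{n,\alpha}(x) \;=\; -\,x^{n+1}\,\Phi_{n,\alpha}(1/x),
\]
for $n=1$ and for $n \geqslant 2$. In the main case $n \geqslant 2$, substituting $y = 1/x$ into \eqref{PHI} and multiplying by $-x^{n+1}$ sends each summand of $\Phi_{n,\alpha}$ to the corresponding summand of $\widetilde{\Phi}_{n,\alpha}$: the leading term $(1-\alpha)^2 x^{n+1}$ and the constant $-(1-\alpha)^2$ swap with each other; the term $\alpha^2 x$ becomes $-\alpha^2 x^n$; the half-integer sum $\sum_{i=0}^{n-1} x^{n-i+1/2}$ turns into $-\sum_{j=1}^{n} x^{n-j+1/2}$ after the reindexing $j = n-i$; and the integer sum $\sum_{i=0}^{n-2} x^{i+2}$ turns into $-\sum_{j=1}^{n-1} x^{j}$ after the reindexing $j = n-1-i$. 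The case $n=1$ is a direct verification.

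Because $x \mapsto 1/x$ is an involution of $(0,\infty)$ that interchanges $(0,1)$ and $(1,\infty)$, the identity above shows that the positive roots of $\Phi_{n,\alpha}$ and of $\widetilde{\Phi}_{n,\alpha}$ are in reciprocal correspondence. Theorem~\ref{Aa-main1} guarantees that $\gamma_n(\alpha)$ is the \emph{unique} positive root of $\Phi_{n,\alpha}$ and lies in $(0,1)$; consequently, $\widetilde{\Phi}_{n,\alpha}$ has a unique positive root, namely $1/\gamma_n(\alpha)$, and this root lies in $(1,\infty)$, which is exactly the defining property of $\widetilde{\gamma}_n(\alpha)$ in Theorem~\ref{Aa-main2}. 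The equality of the two $\eta_n(\alpha)$ sequences then transports both the strict ordering $2 = \eta_0(\alpha) < \eta_1(\alpha) < \eta_2(\alpha) < \cdots$ and the asymptotic identity $\lim_{n \to \infty} \eta_n(\alpha) = \Psi(\alpha)$ from Theorem~\ref{Aa-main1}, completing the proof. I do not expect any serious obstacle; the only mildly delicate point is careful bookkeeping of the reindexing, complicated by the half-integer exponents $x^{n-i+1/2}$, but the substitution $x \leftrightarrow 1/x$ preserves this structure and the verification is purely mechanical.
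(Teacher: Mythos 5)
Your proposal is correct and follows essentially the same route as the paper: the paper's own proof of Theorem~\ref{Aa-main2} rests on the identity $\Phi_{n,\alpha}(x) = -x^{n+1}\widetilde{\Phi}_{n,\alpha}(1/x)$ (equation \eqref{equi}), from which it concludes $\widetilde{\gamma}_n(\alpha)=1/\gamma_n(\alpha)$ and the equality of the two expressions for $\eta_n(\alpha)$ via the invariance of $t^{1/2}+t^{-1/2}$ under $t\mapsto 1/t$, exactly as you do. The only difference is that the paper also re-derives $\widetilde{\Phi}_{n,\alpha}$ from the parametrization with $\theta>1$ (Case 2), whereas you obtain it purely algebraically from Theorem~\ref{Aa-main1}; the logical reduction is the same.
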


\begin{thm}[New version of Hoffman's theorem]\label{hoff-new}
For each $n \in \N$, let $\delta_n$ be the only positive root of $$\Phi_{n,0}(x)=x^{n+1}+x^n+x^{n-1}+\cdots+x^2-1.$$
The numbers $\zeta_n=\delta_n^{\frac{1}{2}}+\delta_n^{-\frac{1}{2}}$ satisfy the sequence of inequalities
$$2=\zeta_1<\zeta_2<\cdots,$$ and
are precisely the limit points of $A$-spectral radius of graphs smaller than $$\lim\limits_{n\rightarrow\infty}\zeta_n=\sqrt{2+\sqrt{5}}.$$
\end{thm}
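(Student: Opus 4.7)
The plan is to obtain Theorem~\ref{hoff-new} as the direct specialization of Theorem~\ref{Aa-main1} (whose proof is promised in Section~4) to the value $\alpha=0$. Once one grants Theorem~\ref{Aa-main1}, what remains is essentially a change-of-variables bookkeeping: matching polynomials, handling the degenerate case $n=1$, and confirming that the limit value coincides with $\sqrt{2+\sqrt 5}$.

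First I would specialize the polynomial $\Phi_{n,\alpha}$ of \eqref{PHI} at $\alpha=0$. The coefficients $2\alpha(1-\alpha)$ and $\alpha^2$ vanish, while $(1-\alpha)^2$ and $1-2\alpha+2\alpha^2$ both collapse to $1$, so that
\[
\Phi_{n,0}(x) \;=\; x^{n+1}+\sum_{i=0}^{n-2} x^{i+2}-1 \;=\; x^{n+1}+x^n+x^{n-1}+\cdots+x^2-1,
\]
which is precisely the polynomial appearing in the statement of Theorem~\ref{hoff-new}. For $n=1$ the middle sum is empty and gives $\Phi_{1,0}(x)=x^2-1$, so $\gamma_1(0)=\delta_1=1$ and $\eta_1(0)=\zeta_1=2$; this is also the reason for the degeneracy $\eta_0(0)=\eta_1(0)=2$ recorded in Theorem~\ref{Aa-main1}. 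For $n\geqslant 2$, a one-line sign analysis (all non-constant coefficients positive, with $\Phi_{n,0}(0)=-1<0$ and $\Phi_{n,0}(1)=n-1>0$) gives a unique positive root $\delta_n=\gamma_n(0)\in(0,1)$, and hence $\zeta_n=\delta_n^{1/2}+\delta_n^{-1/2}=\eta_n(0)$.

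With these identifications, the monotonicity $2=\zeta_1<\zeta_2<\cdots$ and the statement that the $\zeta_n$ exhaust the $A$-limit points below their supremum become the $\alpha=0$ slice of Theorem~\ref{Aa-main1}. The evaluation $\lim_n \zeta_n=\Psi(0)=\sqrt{2+\sqrt 5}$ is precisely the value recorded immediately after \eqref{psi}, which closes the argument.

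As a consistency check — and also as a self-contained alternative route that bypasses Theorem~\ref{Aa-main1} entirely and relies only on the original Theorem~\ref{hoffman} — I would verify that the polynomial $\Phi_{n,0}$ is palindromically related to Hoffman's $\phi_n$ from \eqref{puah0}. A brief computation yields
\[
\phi_n(1/x)\;=\;-x^{-(n+1)}\,\Phi_{n,0}(x),
\]
so Hoffman's root $\beta_n>1$ and our $\delta_n\in(0,1)$ are reciprocals, which forces $\eta_n=\beta_n^{1/2}+\beta_n^{-1/2}=\delta_n^{1/2}+\delta_n^{-1/2}=\zeta_n$. Consequently the \emph{main obstacle} at this stage is negligible: all genuine analytic work has been outsourced to Theorem~\ref{Aa-main1}, and Theorem~\ref{hoff-new} merely repackages Hoffman's classical sequence through the more natural polynomial $\Phi_{n,0}$ that arises from the $A_{\alpha}$-framework.
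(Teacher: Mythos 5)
Your proposal is correct and follows essentially the same route as the paper: the paper's own proof likewise identifies $\delta_n=\gamma_n(0)$ and $\zeta_n=\eta_n(0)$ (the $\alpha=0$ specialization of Theorem~\ref{Aa-main1}) and then verifies the reciprocal relation $\Phi_{n,0}(x)=-x^{n+1}\phi_n(1/x)$, so that $\delta_n=\beta_n^{-1}$ and the $\zeta_n$ coincide with Hoffman's $\eta_n$. Your write-up just spells out the coefficient specialization, the $n=1$ degeneracy, and the root-uniqueness check in more detail than the paper's two-line argument.
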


\begin{proof}
Clearly, $\delta_n=\gamma_n(0)$ and $\zeta_n=\eta_n(0)$. From \eqref{puah0}, a direct calculation yields $$\Phi_{n,0}(x)=-x^{n+1}\phi_n \left(\frac{\;1\;}{x}\right).$$
Hence, $\delta_n = \beta_n^{-1}$, and so $\zeta_n = \alpha_n$.
\end{proof}

\section{Preliminaries and technical results}
Apart from Lemma~\ref{alpha-delta}, which is due to Nikiforov, all the other results given in this section without a proof are taken from the reference \cite{JFW-JW-MB} written by the same authors of this paper.

\begin{lem}{\rm \cite{niki}}\label{alpha-delta}
Set $\alpha \in [0,1]$. Let $G$ be a connected graph with maximum degree $\Delta$.
\begin{itemize}
\item[$\mathrm{(i)}$]
Then $\frac{1}{2}\left(\alpha(\Delta+1)+\sqrt{\alpha^2(\Delta+1)^2+4\Delta(1-2\alpha)}\right) \leqslant \rho\!_{_{A_{\alpha}}}\!(G) \leqslant \Delta$.
\item[$\mathrm{(ii)}$]
If $H$ is a proper subgraph of $G$, then $\rho\!_{_{A_{\alpha}}}\!(H)<\rho\!_{_{A_{\alpha}}}\!(G)$.
\item[$\mathrm{(iii)}$] If $0 \leqslant \alpha < \beta \leqslant 1$, then
$ \rho\!_{_{A_{\alpha}}}\! (G) < \rho\!_{_{A_{\beta}}}\!(G)$.
\end{itemize}
\end{lem}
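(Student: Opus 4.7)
The plan is to treat (ii), (i), (iii) in that order, because (ii) underpins the lower bound in (i). Throughout, the fact that $G$ is connected makes $A_\alpha(G)$ a non-negative irreducible matrix, so the Perron-Frobenius machinery is available.

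For (ii), I would embed $A_\alpha(H)$ as a principal block of an $|V(G)|\times|V(G)|$ matrix $\widetilde{M}$ padded with zeros for the vertices in $V(G)\setminus V(H)$. The resulting $\widetilde{M}$ is dominated entrywise by $A_\alpha(G)$: off-diagonal entries reflect $E(H)\subseteq E(G)$, diagonal entries reflect $\alpha d_H(v)\le \alpha d_G(v)$, and rows/columns for the removed vertices are zero. Properness of $H$ forces at least one strict drop, so $\widetilde{M}\neq A_\alpha(G)$. The strict monotonicity of the Perron-Frobenius spectral radius for a non-negative matrix dominated by an irreducible one then yields $\rho(\widetilde{M})<\rho(A_\alpha(G))$, and $\rho(\widetilde{M})=\rho_{A_\alpha}(H)$ by construction.

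For (i), the upper bound is the immediate observation that each row sum of $A_\alpha(G)$ equals $\alpha d(v_i)+(1-\alpha)d(v_i)=d(v_i)\le \Delta$, and the spectral radius of a non-negative matrix is at most its maximum row sum. For the lower bound, I pick a vertex $v$ of degree $\Delta$; the star $K_{1,\Delta}$ on $\{v\}\cup N(v)$ is a subgraph of $G$. If $G\neq K_{1,\Delta}$, part (ii) gives $\rho_{A_\alpha}(G)>\rho_{A_\alpha}(K_{1,\Delta})$; otherwise $G=K_{1,\Delta}$ and equality holds. It therefore suffices to compute $\rho_{A_\alpha}(K_{1,\Delta})$: by uniqueness of the Perron eigenvector and the leaf-permuting symmetry, the eigenvector assigns a single value $a$ to the center and a single value $b$ to each leaf, giving
\[
\alpha\Delta a+(1-\alpha)\Delta b=\rho a,\qquad (1-\alpha)a+\alpha b=\rho b.
\]
Eliminating $b$ reduces to the quadratic $\rho^2-\alpha(\Delta+1)\rho+\Delta(2\alpha-1)=0$, whose larger root is exactly the claimed lower bound.

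For (iii), let $x>0$ be the unit Perron eigenvector of $A_\alpha(G)$, so $\rho_{A_\alpha}(G)=x^\top A_\alpha(G)\,x$. Since $A_\beta(G)-A_\alpha(G)=(\beta-\alpha)L(G)$ and $x^\top L(G)\,x=\sum_{v_iv_j\in E(G)}(x_i-x_j)^2 \ge 0$, the Rayleigh quotient for $A_\beta(G)$ gives
\[
\rho_{A_\beta}(G)\ge x^\top A_\beta(G)\,x=\rho_{A_\alpha}(G)+(\beta-\alpha)\!\!\sum_{v_iv_j\in E(G)}\!\!(x_i-x_j)^2.
\]
The principal subtlety, and the main obstacle to expect, is strictness: the Rayleigh bound is strict precisely when $x$ is non-constant, which by connectedness is equivalent to $G$ being non-regular. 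If $G$ is $k$-regular, then every row sum of $A_\alpha(G)$ equals $k$ for every $\alpha$, forcing $\rho_{A_\alpha}(G)\equiv k$ and equality rather than strict inequality; thus the strict form of (iii) implicitly carries a non-regularity hypothesis, which is harmless in the subsequent applications of this lemma.
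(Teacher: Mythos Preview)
The paper does not supply its own proof of this lemma: it is quoted verbatim from Nikiforov's paper \cite{niki} and explicitly flagged as such in the preamble to Section~3. There is therefore no argument in the paper to compare your proposal against.

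That said, your proofs are the standard ones and are correct in substance. Your treatment of (ii) via entrywise domination and Perron--Frobenius is exactly the intended argument, and your star computation for the lower bound in (i) is the canonical one. You are also right to flag the regularity caveat in (iii): as stated with a strict inequality, (iii) is simply false for regular graphs, since then every row sum of $A_\alpha(G)$ equals the common degree $k$ independently of $\alpha$. The same boundary issue afflicts (ii) at $\alpha=1$: there $A_1(G)=D(G)$ is diagonal, your irreducibility premise fails, and indeed the conclusion can fail too (e.g.\ delete an end-vertex of $P_5$ and the maximum degree is unchanged). Both defects are in the lemma as transcribed rather than in your reasoning; the paper only ever invokes (ii) and (iii) for $\alpha\in[0,1)$ and for non-regular trees, where your arguments go through cleanly.
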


\begin{figure}[h]
\begin{center}
 \resizebox{0.75\textwidth}{!}
{\begin{tikzpicture}[vertex1_style/.style={circle,draw,minimum size=0.17 cm,inner sep=0pt, fill=black},vertex2_style/.style={circle,draw,minimum size=0.2 cm,inner sep=0pt}]
    \foreach \x [count=\p] in {0,...,5} {
        \node[shape=circle,fill=black, scale=0.5] (\p) at (\x*72:.8) {};};
    \draw[thick] (5) arc (-72:72:.8);
     \draw[thick] (3) arc (144:216:.8);
      \draw[thick, dotted] (2) arc (72:144:.8);
       \draw[thick, dotted] (5) arc (-72:-144:.8);
\node[left=3pt] at (.8,0) {\small$v_0$};
\node[above=2pt] at (72:.8) {\small$v_1$};
\node[below=2pt] at (-72:.8) {\small$v_{k-1}$};
\node[left=3pt] at (144:.8) {\small$v_t$};
\node[left=3pt] at (216:.8) {\small$v_{t+1}$};
    \node[shape=circle,fill=black, scale=0.5] (6) at (1.6,.6) {};
      \node[shape=circle,fill=black, scale=0.5] (7) at (1.6,-.6) {};
       \node[shape=circle,fill=black, scale=0.18] (8) at (1.6,.25) {};
         \node[shape=circle,fill=black, scale=0.18] (9) at (1.6,0) {};
                  \node[shape=circle,fill=black, scale=0.18] (10) at (1.6,-.25) {};
      \draw[thick] (1)--(6);
      \draw[thick] (1)--(7);
\node[vertex1_style] (b1) at (3.5,.6)  {};
\node[vertex1_style] (c1) at (3.5,-.6)  {};
 \node[shape=circle,fill=black, scale=0.18] (x8) at (3.5,.25) {};
         \node[shape=circle,fill=black, scale=0.18] (x9) at (3.5,0) {};
                  \node[shape=circle,fill=black, scale=0.18] (x10) at (3.5,-.25) {};
                   \node[shape=circle,fill=black, scale=0.18] (y8) at (10.5,.25) {};
         \node[shape=circle,fill=black, scale=0.18] (y9) at (10.5,0) {};
                  \node[shape=circle,fill=black, scale=0.18] (y10) at (10.5,-.25) {};

 \node[vertex1_style, label=above:\small$\,v_0$] (a1) at  (4.1,0) {};
  \node[vertex1_style, label=above:\small$\,v_1$] (a2) at  (4.9,0) {};
    \node[vertex1_style, label=above:\small$\,v_{t-1}$] (a3) at  (6.2,0) {};
        \node[vertex1_style, label=above:$v_{t}$] (a4) at  (7,0) {};
                \node[vertex1_style, label=above:\small$\,v_{t+1}$] (a5) at  (7.8,0) {};
                           \node[vertex1_style, label=above:\small$\,v_{k-1}$] (a6) at  (9.1,0) {};
                                   \node[vertex1_style, label=above:\small$\,v_{k}$] (a7) at  (9.9,0) {};
                                   \node[vertex1_style] (b2) at (10.5,.6)  {};
                                        \node[vertex1_style] (c2) at (10.5,-.6)  {};
                                        \draw[thick] (a1)--(b1);
                                             \draw[thick] (a1)--(c1);
                                               \draw[thick] (a7)--(b2);
                                             \draw[thick] (a7)--(c2);
                                               \draw[thick] (a1)--(a2);
                                               \draw[thick] (a6)--(a7);
                                                \draw[thick] (a3)--(a5);
                                                 \draw[thick, dotted] (a2)--(a3);
                                                   \draw[thick, dotted] (a5)--(a6);
                                                   \node at (0.2,-1.8) {\small Type I};
                                                     \node at (7,-1.5) {\small Type II};

\end{tikzpicture} }
\end{center}
\vspace{-8mm}
  \caption{ \label{fig1}  \small The two types of internal path.}
  \end{figure}
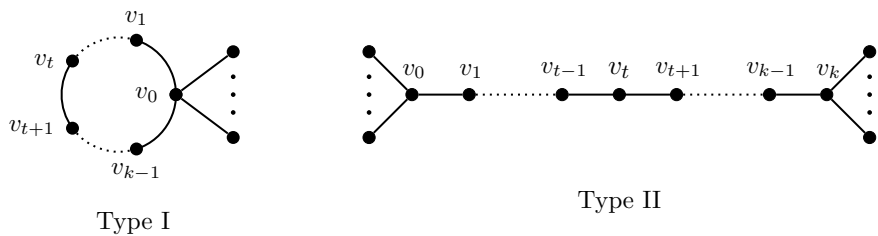

According to  \cite{hof-smi}, an {\it internal path} of a graph $G$ is a walk $v_0 v_1 \dots v_k$ (here $k \geqslant 1$), where the vertices $v_1, \dots, v_k$ are pairwise distinct, $d(v_0) > 2$, $d(v_k) > 2$ and $d(v_i) = 2$ whenever $0 < i < k$. We say that an internal path is of {\em type I} (resp., {\em type II}) if $v_0=v_k$ (resp., $v_0\not=v_k$) (see Fig.~\ref{fig1}).
 In the following lemma and throghout the rest of the paper we denote by $C_n$ the cycle of order $n$ ($n \geqslant 3$), and by $DS_n$  the {\em double snake} of order $n \geqslant 6$, i.e.\ the graph  containing  an internal path of type II $v_0 \dots  v_{n-5}$ such that $d(v_0)=d(v_{n-5})=3$.

\begin{lem}{\rm \cite{JFW-JW-MB}}\label{alpha-internal}
Let $uv$ be an edge of the connected graph $G$, and let $G_{uv}$ be the graph obtained from $G$ by subdividing the edge $uv$ of $G$. Set $\alpha \in [0,1)$.
\begin{itemize}
\item[$\mathrm{(i)}$]
$\rho\!_{_{A_\alpha}}\!(C_n)=2$ and $\rho\!_{_{A_0}}\!(DS_n)=2$;
\item[$\mathrm{(ii)}$]
If $(G,\alpha) \neq (C_n,\alpha)$ and $uv$ is not in an internal path of $G$, then $\rho\!_{_{A_{\alpha}}}\!(G_{uv})>\rho\!_{_{A_{\alpha}}}\!(G)$;
\item[$\mathrm{(iii)}$]
If $(G,\alpha) \neq (DS_n,0)$ and $uv$ belongs to an internal path of $G$, then $\rho\!_{_{A_{\alpha}}}\!(G_{uv})<\rho\!_{_{A_{\alpha}}}\!(G)$.
\end{itemize}
\end{lem}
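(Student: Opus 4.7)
Part~(i) is a direct eigenvector computation. Since $C_n$ is $2$-regular, $A_\alpha(C_n) = 2\alpha I + (1-\alpha) A(C_n)$, and the all-ones vector is a Perron eigenvector of $A(C_n)$ with eigenvalue $2$; hence $\rho_{A_\alpha}(C_n)=2$. For $DS_n$ with $\alpha=0$, I would verify by substitution that the vector taking value $1$ on each vertex of the internal path and $1/2$ on each of the four pendant leaves is an eigenvector of $A(DS_n)$ with eigenvalue $2$.

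For parts~(ii) and~(iii) the key tool is a Rayleigh-quotient transfer identity anchored at the subdivision vertex $w$. Expanding $A_\alpha(G_{uv}) = \alpha D(G_{uv}) + (1-\alpha)A(G_{uv})$ and using $d_{G_{uv}}(w)=2$ together with $d_{G_{uv}}(z)=d_G(z)$ for $z\in V(G)$, I would check that for any vector $y$ on $V(G_{uv})$ with restriction $x=y|_{V(G)}$ and $y(w) = (1-\alpha)(x(u)+x(v))/(\nu-2\alpha)$ (the value forcing the eigenvalue equation at $w$ at parameter $\nu>2\alpha$, legal by Lemma~\ref{alpha-delta}(i)), one has
\[
y^\top A_\alpha(G_{uv}) y - \nu\, y^\top y \;=\; x^\top A_\alpha(G) x - \nu\, x^\top x \;-\; \frac{(1-\alpha)\bigl[2(\nu-2\alpha)\, x(u)\, x(v) - (1-\alpha)(x(u)+x(v))^2\bigr]}{\nu - 2\alpha}.
\]
The sign of the bracketed quantity, combined with the Rayleigh principle, will determine the relative order of $\rho_{A_\alpha}(G)$ and $\rho_{A_\alpha}(G_{uv})$.

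For~(iii) I take $y$ to be the positive Perron eigenvector of $A_\alpha(G_{uv})$ at eigenvalue $\mu$, so the left side vanishes at $\nu=\mu$ and the identity becomes $x^\top A_\alpha(G) x - \mu\, x^\top x = (1-\alpha)\Delta/(\mu-2\alpha)$ with $\Delta := 2(\mu-2\alpha)\, y(u)\, y(v) - (1-\alpha)(y(u)+y(v))^2$. The edge $uv$, lying on the internal path $v_0,\dots,v_k$ of $G$, expands after subdivision into the length-$2$ segment $u,w,v$ embedded in a longer internal path of $G_{uv}$, so $u$ and $v$ sit two steps apart on it. Interior Perron values on this path obey $(\mu-2\alpha)\,y(\cdot) = (1-\alpha)\,(\text{sum over path neighbours})$ and therefore have the form $A r^j + B r^{-j}$ with $r + r^{-1} = (\mu-2\alpha)/(1-\alpha)$; the scalars $A,B$ are pinned down by the eigenequations at the degree-$>2$ endpoints $v_0,v_k$. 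Substituting $y(u)=Ar^i+Br^{-i}$ and $y(v)=Ar^{i+2}+Br^{-i-2}$ into $\Delta$ and simplifying produces a factor $(r-1)^2$ times a nonnegative residual (whose nonnegativity follows from the boundary conditions forcing $A,B$ both positive), giving $\Delta>0$ strictly whenever $r>1$. The Rayleigh principle then yields $\rho_{A_\alpha}(G)>\mu$. The boundary $r=1$ forces $\mu=2$ and $\alpha=0$, and Smith's classification of graphs with $\rho_A=2$ combined with the internal-path hypothesis identifies $G=DS_n$, the excluded configuration.

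For~(ii), the analogous attempt --- take $x$ to be the Perron vector of $G$, $\nu=\lambda:=\rho_{A_\alpha}(G)$, and extend to $y$ --- does not always succeed: the sign condition $(1-\alpha)(x(u)+x(v))^2 > 2(\lambda-2\alpha)\,x(u)\,x(v)$ can fail (for example, when $G=K_{1,n}$ and $uv$ is a pendant edge, the ratio $x(u)/x(v)$ is too far from $1$). The workaround is to exploit the pendant-chain structure that the hypothesis guarantees: since $uv$ is not on any internal path, one of $u,v$ lies on a maximal chain $u_0,u_1,\dots,u_\ell$ of degree-$2$ vertices terminating at a leaf $u_0$. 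Combining the leaf boundary $(\lambda-\alpha)\,x(u_0) = (1-\alpha)\,x(u_1)$ with the chain recurrence and perturbing $x$ along this chain before extending to $G_{uv}$ produces a modified test vector whose Rayleigh quotient strictly exceeds $\lambda$, giving $\rho_{A_\alpha}(G_{uv})>\lambda$. The principal obstacle will be the explicit design of this perturbation and its uniform verification across all chain lengths $\ell$ and all $\alpha\in[0,1)$; a secondary technical point in~(iii) is the positivity check on $A,B$ from the boundary eigenequations at $v_0,v_k$, which requires a case split according to the relative magnitudes of $y(v_0)$ and $y(v_k)$.
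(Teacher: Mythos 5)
First, a point of comparison: the paper never proves Lemma~\ref{alpha-internal} at all --- it is imported verbatim from \cite{JFW-JW-MB} (it is the $A_\alpha$-analogue of the Hoffman--Smith internal-path lemma) --- so your attempt can only be judged on its own merits. Part (i) is correct, and your transfer identity for the subdivision vertex $w$ is algebraically right (I checked it). The problem is part (iii). Writing $y(v_j)=Ar^j+Br^{-j}$ along the path and $\mu-2\alpha=(1-\alpha)(r+r^{-1})$, the quantity you must show positive factors as
\[
\Delta \;=\; -(1-\alpha)(r-1)^2(r^2+1)\Bigl(P^2+\tfrac{Q^2}{r^4}-\tfrac{2(r^2+r+1)}{r^3}\,PQ\Bigr),
\qquad P=Ar^{i},\; Q=Br^{-i},
\]
so $\Delta$ is emphatically \emph{not} ``$(r-1)^2$ times a nonnegative residual'': $\Delta>0$ holds if and only if the bracket is negative, i.e.\ if and only if the ratio $y(u)/y(v)$ lies in a bounded window around $1$ determined by $r$ (equivalently, iff $y(w)$ lies below the \emph{harmonic} mean of $y(u),y(v)$ --- the eigen-equation at $w$ only gives the arithmetic-mean bound). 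Positivity of $A$ and $B$ is neither sufficient --- if $B/A$ is small and the subdivided edge sits near one end of a long internal path, $P\gg Q$ makes the bracket positive and $\Delta<0$, even though the lemma itself is true --- nor established: the eigen-equations at the two endpoints of degree $>2$ only yield $A+Br^{2}>0$ and $Ar^{k+2}+Br^{-(k+2)}>0$, not $A,B>0$. To rescue this route one has to use that all single subdivisions of edges of the same internal path give isomorphic graphs, relocate the comparison pair $u,v$ to straddle a path vertex where the Perron entries are (near) minimal, and prove the ratio bound there; none of that is in your sketch. The boundary discussion is also off: $r=1$ is equivalent to $\mu=2$ for \emph{every} $\alpha$, and excluding $\alpha>0$ in that case needs, e.g., the strict monotonicity of Lemma~\ref{alpha-delta}(iii) applied to the double snake, not Smith's $\alpha=0$ classification alone.

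Part (ii) is a plan rather than a proof. You correctly diagnose that the naive extension of the Perron vector of $G$ fails (your star example is right: one would need $2(\lambda-2\alpha)x(u)x(v)<(1-\alpha)(x(u)+x(v))^2$, which fails for $K_{1,n}$ with $n$ large), and the structural claim that a non-internal edge lies on a chain of degree-$2$ vertices ending in a leaf is fine; but the entire argument is then delegated to an unspecified ``perturbation along the pendant chain,'' which you yourself flag as the principal obstacle. Since designing that test vector and verifying it uniformly in the chain length and in $\alpha\in[0,1)$ \emph{is} the content of (ii), the statement is left unproven. So as it stands the proposal establishes (i) only; both (ii) and (iii) contain genuine gaps at their decisive steps.
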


For each positive integer $n$, we consider the matrix $B_{n}$ obtained from $A_{\alpha}(P_{n+1})$ by deleting the row and column corresponding to a vertex of degree one of the path $P_{n+1}$. We also make use of the following notations:
\begin{equation}\label{kazz1}
\Delta_{\lambda,\alpha} =\sqrt{(\lambda-4\alpha+2)(\lambda-2)} \qquad \text{and}  \qquad h(\lambda)_{\alpha}  = \frac{\lambda -\Delta_{\lambda,\alpha}}{2\alpha(\lambda-2)+2}.
\end{equation}

\begin{lem}{\rm \cite{JFW-JW-MB}}\label{gongshi} Let $n$ be any non-negative integer. After setting
\begin{equation*}
s =\frac{\lambda-2\alpha+\Delta_{\lambda,\alpha}}{2} \;\; and \;\; t= \frac{\lambda-2\alpha-\Delta_{\lambda,\alpha}}{2},
\end{equation*}
\begin{itemize}
\item[$\mathrm{(i)}$]
$\phi(P_{n+1})=\Delta_{\lambda,\alpha}^{-1}((s+\alpha)^2s^{n}-(t+\alpha)^2t^{n})$ for $\alpha \in [0,1)$;
\item[$\mathrm{(ii)}$]
$\phi(B_{n+1})=$\small$\displaystyle \frac{1}{\Delta_{\lambda,\alpha}} \cdot \frac{\alpha}{\left( \alpha(\lambda-2)+1\right)} \left((s+\alpha)^2
\left(s+\frac{(1-\alpha)^2}{\alpha}\right)s^{n}-(t+\alpha)^2\left(t+\frac{(1-\alpha)^2}{\alpha}\right)t^{n}\right).$
\end{itemize}
where the equality (ii) holds for $\alpha \in (0,1)$.
\end{lem}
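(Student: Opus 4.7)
The plan is to exploit the tridiagonal structure of $A_{\alpha}(P_{n+1})$ and $B_{n+1}$: both matrices have constant off-diagonal entries $1-\alpha$ and diagonal entries $2\alpha$, except at rows indexed by degree-one vertices, where the diagonal entry drops to $\alpha$. First I would introduce the auxiliary sequence $p_k(\lambda)$, defined as the characteristic polynomial of the purely tridiagonal $k\times k$ matrix with constant diagonal $2\alpha$ and off-diagonal $1-\alpha$. This sequence satisfies the three-term recurrence $p_k = (\lambda-2\alpha)\,p_{k-1} - (1-\alpha)^2\,p_{k-2}$ with $p_0=1$ and $p_1 = \lambda-2\alpha$. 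The associated characteristic equation $y^2 - (\lambda-2\alpha)y + (1-\alpha)^2 = 0$ has roots precisely the $s$ and $t$ defined in the statement; in particular $s+t = \lambda-2\alpha$ and $st = (1-\alpha)^2$, yielding the closed form $p_k = (s^{k+1}-t^{k+1})/\Delta_{\lambda,\alpha}$.

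Cofactor expansion along the first row of $\lambda I - B_{n+1}$ yields $\phi(B_{n+1}) = (\lambda-\alpha)\,p_n - (1-\alpha)^2\,p_{n-1}$, while expanding $\phi(P_{n+1})$ along either of its degree-one rows produces the recurrence $\phi(P_{n+1}) = (\lambda-\alpha)\,\phi(B_n) - (1-\alpha)^2\,\phi(B_{n-1})$. Substituting the closed form for $p_k$ into the first identity and invoking the key simplification
\[
(\lambda-\alpha)\,s - (1-\alpha)^2 = s(s+\alpha),
\]
which is immediate from $\lambda-\alpha = s+t+\alpha$ combined with $st = (1-\alpha)^2$ (and analogously in $t$), gives the compact expression
\[
\phi(B_{n+1}) = \Delta_{\lambda,\alpha}^{-1}\bigl((s+\alpha)\,s^{n+1} - (t+\alpha)\,t^{n+1}\bigr).
\]
Iterating the same simplification once more inside the formula for $\phi(P_{n+1})$ produces part (i): $\phi(P_{n+1}) = \Delta_{\lambda,\alpha}^{-1}\bigl((s+\alpha)^2 s^n - (t+\alpha)^2 t^n\bigr)$.

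For part (ii) it remains only to recast the compact form for $\phi(B_{n+1})$ obtained above into the displayed expression. The reconciliation rests on the quadratic identity
\[
(s+\alpha)\bigl(\alpha s + (1-\alpha)^2\bigr) = s\bigl(\alpha(\lambda-2)+1\bigr),
\]
verified by direct expansion followed by the substitution $s^2 = (\lambda-2\alpha)\,s - (1-\alpha)^2$ (and likewise for $t$). Multiplying and dividing by $\alpha(\lambda-2)+1$ converts $(s+\alpha)s^{n+1}$ into $\alpha(s+\alpha)^2\bigl(s+(1-\alpha)^2/\alpha\bigr)s^n/(\alpha(\lambda-2)+1)$; this step is licit because $\alpha \neq 0$ under the hypothesis $\alpha\in(0,1)$. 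The main obstacle is the careful bookkeeping for the cofactor expansion and the minors that arise when removing the row through the lone $\lambda-\alpha$ diagonal entry, together with spotting the non-obvious quadratic identity that connects the compact and displayed forms; everything else is routine manipulation of Chebyshev-type recurrences.
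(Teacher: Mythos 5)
Your proof is correct: the three-term recurrence for $p_k$, its closed form $p_k=(s^{k+1}-t^{k+1})/\Delta_{\lambda,\alpha}$ (valid since $s,t$ are exactly the roots of $y^2-(\lambda-2\alpha)y+(1-\alpha)^2=0$, as $(\lambda-2\alpha)^2-4(1-\alpha)^2=(\lambda-4\alpha+2)(\lambda-2)$), and the two key identities $(\lambda-\alpha)s-(1-\alpha)^2=s(s+\alpha)$ and $(s+\alpha)\bigl(\alpha s+(1-\alpha)^2\bigr)=s\bigl(\alpha(\lambda-2)+1\bigr)$ all check out and deliver (i) and the displayed form of (ii) for $\alpha\in(0,1)$. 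The paper gives no proof of this lemma (it is imported from the cited survey), and your tridiagonal-expansion argument is the standard route one would expect there; the only slips are cosmetic: the $\lambda-\alpha$ diagonal entry of $\lambda I-B_{n+1}$ lies in the row of the surviving degree-one vertex (the last row in the natural ordering), not the first, and the small cases $n=0,1$ of the expansions should be noted as covered by the conventions $p_{-1}=0$ and $\phi(B_0)=1$, with which your closed forms remain valid.
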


\medskip
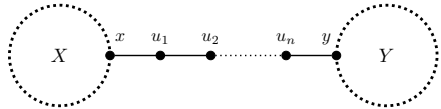
\begin{figure}[h!]
\begin{center}
 \resizebox{0.37\textwidth}{!}
{\begin{tikzpicture}[vertex1_style/.style={circle,draw,minimum size=0.17 cm,inner sep=0pt, fill=black},vertex2_style/.style={circle,draw,minimum size=0.2 cm,inner sep=0pt}]

\draw[dotted, ultra thick] (-.5,0) circle (1);
    \node at (-.5,0) {$X$};
    \node[vertex1_style] (a0) at  (.5,0) {};
     \node[vertex1_style, label=above:\small$u_1$] (a1) at  (1.5,0) {};
      \node[vertex1_style, label=above:\small$u_2$] (a2) at  (2.5,0) {};
       \node[vertex1_style, label=above:\small$u_n$] (a3) at  (4,0) {};
         \node[vertex1_style] (a4) at  (5,0) {};
         \draw[dotted, ultra thick] (5,0) arc (-180:180:1);
         \draw[thick] (a0) -- (a2);
           \draw[thick, dotted] (a2) -- (a3);
                 \draw[thick] (a3) -- (a4);
             \node at (6,0) {$Y$};
              \node at (.7,.36) {\small$x$};
                \node at (4.8,.33) {\small$y$};

\end{tikzpicture} }
\end{center}
\vspace{-6mm}
  \caption{ \label{fig3}  \small The graph $XY(x,y;n)$}
  \end{figure}

\begin{lem}{\rm \cite{JFW-JW-MB}}\label{alpha-gxy} Let $X$ and $Y$
be two vertex-disjoint connected graphs, and let $G_n=XY(x,y;n)$ be the graph  obtained by joining $x \in V(X)$ and $y \in V(Y)$  by a path of length $n+1$ (see Fig. \ref{fig3}). Then,
\begin{equation}\label{mink}
\lim\limits_{n\rightarrow\infty}\rho\!_{_{A_{\alpha}}}\!(G_n)=\max \left\{ \lim\limits_{n\rightarrow\infty}\rho\!_{_{A_{\alpha}}}\!(X_x(P_n)),\lim\limits_{n\rightarrow\infty}\rho\!_{_{A_{\alpha}}}\!(Y_y(P_n)) \right\}.
\end{equation}
\end{lem}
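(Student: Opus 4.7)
I would prove the stated equality by establishing the two inequalities separately: the lower bound $\lim_n \rho_{A_\alpha}(G_n) \geqslant M$ (with $M := \max\{M_X,M_Y\}$, $M_X := \lim_n \rho_{A_\alpha}(X_x(P_n))$, $M_Y := \lim_n \rho_{A_\alpha}(Y_y(P_n))$) via subgraph monotonicity, and the upper bound via an asymptotic analysis of the Perron eigenvector along the internal path.

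First I would observe that $X_x(P_n)$ is a proper subgraph of $G_n$: deleting from $G_n$ the vertex set $V(Y)$ together with the last internal vertex adjacent to $y$ leaves $X$ with a pendant path $u_1 \cdots u_n$ attached at $x$, which is $X_x(P_n)$; the symmetric deletion gives $Y_y(P_n)\subseteq G_n$. Lemma~\ref{alpha-delta}(ii) then yields $\rho_{A_\alpha}(G_n) \geqslant \max\{\rho_{A_\alpha}(X_x(P_n)),\rho_{A_\alpha}(Y_y(P_n))\}$. Each sequence on the right is strictly increasing (Lemma~\ref{alpha-delta}(ii)) and bounded by the common maximum degree (Lemma~\ref{alpha-delta}(i)), so $M_X,M_Y$ exist and $\liminf_n \rho_{A_\alpha}(G_n) \geqslant M$. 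To see that $\{\rho_{A_\alpha}(G_n)\}$ itself converges, I would apply Lemma~\ref{alpha-internal}(iii): for $n$ large enough, subdividing a middle edge of the internal path of $G_n$ produces $G_{n+1}$ with strictly smaller $A_\alpha$-spectral radius (the exceptional case $(DS_m,0)$ is to be ruled out by a direct check, as it requires both $X$ and $Y$ to reduce to the degree pattern of the endpoints of a double snake). Hence $L := \lim_n \rho_{A_\alpha}(G_n)$ exists and satisfies $L \geqslant M$.

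The crux of the proof is the reverse inequality $L \leqslant M$. Let $f_n > 0$ be the Perron eigenvector of $A_\alpha(G_n)$ with eigenvalue $\rho_n$, normalized so that $\max_{v \in V(X)\cup\{u_1,\dots,u_{\lfloor n/2\rfloor}\}} f_n(v) = 1$. Along the internal path, $f_n(u_i)$ satisfies the second-order linear recurrence
\[
(1-\alpha)\,f_n(u_{i+1}) = (\rho_n - 2\alpha)\,f_n(u_i) - (1-\alpha)\,f_n(u_{i-1}),
\]
whose characteristic equation $(1-\alpha)r^2 - (\rho_n - 2\alpha)r + (1-\alpha) = 0$ has two positive roots $r_1 > 1 > r_2 = r_1^{-1}$ as soon as $\rho_n > 2$. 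A diagonal subsequence extraction produces a pointwise limit $f_\infty$ on the semi-infinite graph $X_x(P_\infty)$ that is bounded, nonnegative, nontrivial on $V(X)$, and satisfies the $A_\alpha$-eigenvalue equation with eigenvalue $L$. Since $\rho_{A_\alpha}(X_x(P_\infty)) = M_X$ by monotone convergence from the finite truncations, existence of such an eigenvector forces $L \leqslant M_X$; the symmetric argument on the right half yields $L \leqslant M_Y$, hence $L \leqslant M$.

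The main obstacle I expect is the functional-analytic step at the semi-infinite limit, specifically excluding an eigenvalue strictly larger than $M_X$ being realised by a bounded nonnegative eigenfunction on $X_x(P_\infty)$. A clean way to handle it is to exploit the dichotomy $r_1 > 1 > r_2$ to show that any such $f_\infty$ decays geometrically along the ray, hence lies in $\ell^2$, and then to apply the min--max characterization on finite truncations. An entirely algebraic alternative that avoids the infinite-graph machinery is to compute $\phi_{A_\alpha}(G_n)(\lambda)$ explicitly from a path-deletion recursion, expressing it as a $\mathbb{Z}[\lambda]$-linear combination of the $\phi(P_k)$ and $\phi(B_k)$ furnished by Lemma~\ref{gongshi}; the roots $s,t$ in that lemma encode the same growth/decay dichotomy, and the asymptotic location of the largest root of $\phi_{A_\alpha}(G_n)$ can be read off directly from the resulting closed-form expression.
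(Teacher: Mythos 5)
The paper itself does not prove this lemma (it is quoted from the authors' survey \cite{JFW-JW-MB}, whose natural route is the characteristic-polynomial machinery of Lemma~\ref{gongshi} and $h(\lambda)_\alpha$ that you mention as an ``algebraic alternative''), so I assess your eigenvector-limit argument on its own terms. Your lower-bound half is essentially fine: $X_x(P_n)$ sits inside $G_n$ (in fact deleting $V(Y)$ alone already leaves $X_x(P_n)$; removing $u_n$ as well only costs an index shift), and Lemma~\ref{alpha-delta}(ii) gives $L\geqslant M$; for the eventual monotone decrease you should note that Lemma~\ref{alpha-internal}(iii) applies only when the joining path is internal, i.e.\ $d_X(x),d_Y(y)\geqslant 2$ — the degenerate cases ($X$ or $Y$ equal to $K_1$ or a path rooted at an end vertex) must be dispatched separately, though they are easy because $G_n$ is then itself of the form $Y_y(P_m)$.

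The genuine gap is in the upper bound, at the step you state as a fact: that the diagonal limit $f_\infty$ is ``nontrivial on $V(X)$''. With your normalization $\max_{V(X)\cup\{u_1,\dots,u_{\lfloor n/2\rfloor}\}}f_n=1$, the maximizer can sit at $u_{\lfloor n/2\rfloor}$ and drift to infinity; this is exactly what happens when $M_Y>M_X$, since the Perron vector of $G_n$ decays geometrically away from the $Y$-side, so every fixed vertex of $X_x(P_\infty)$ receives a value tending to $0$ and $f_\infty\equiv 0$, from which nothing follows. The way you then deploy the claim proves too much: if one really had both $L\leqslant M_X$ and, ``symmetrically'', $L\leqslant M_Y$, then $L\leqslant\min\{M_X,M_Y\}$, which together with your own lower bound $L\geqslant\max\{M_X,M_Y\}$ would force $M_X=M_Y$ — contradicting the lemma whenever the two limits differ. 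The statement you can and need to prove is one-sided: normalize $f_n$ by its \emph{global} maximum; for $\rho_n>2$ the eigen-equation on the path gives $f_n(u_{i-1})+f_n(u_{i+1})>2f_n(u_i)$, so by strict discrete convexity the maximum is attained within distance one of $V(X)$ or of $V(Y)$; pass to a subsequence on which it stays on one fixed side and take the limit on that side only. This produces a nonzero, bounded, nonnegative solution on (say) $X_x(P_\infty)$; boundedness kills the $r_1^i$ mode, so the function lies in $\ell^2$, and self-adjointness gives $L\leqslant\lVert A_\alpha(X_x(P_\infty))\rVert=M_X\leqslant M$ (the other case gives $L\leqslant M_Y\leqslant M$, and $L=2\leqslant M$ is trivial). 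Two further points you should not treat as cosmetic: a bounded positive \emph{pointwise} eigenfunction on an infinite graph does not by itself force $L\leqslant M_X$ (positive generalized eigenfunctions exist for every $\lambda$ at or above the supremum of finite-subgraph spectral radii), so the $\ell^2$ step you flag is indispensable; and the identification $\lVert A_\alpha(X_x(P_\infty))\rVert=\lim_n\rho_{A_\alpha}(X_x(P_n))$ needs a word because the compression of the infinite $A_\alpha$-operator to $V(X)\cup\{u_1,\dots,u_n\}$ is not $A_\alpha(X_x(P_n))$ (the diagonal degree term at $u_n$ differs); it is, however, wedged between $A_\alpha(X_x(P_n))$ and a principal submatrix of $A_\alpha(X_x(P_{n+1}))$, which closes that point.
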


Let $2P_n$ be the disjoint union of two copies of $P_n$ and let $u_1,u_2 \in V(2P_n)$ be end-vertices belonging to different components. For every non-trivial connected graph $G$ and every $u \in V(G)$, let the graph $G_u(P_n,P_n)$ be obtained by adding to $G \cup 2P_n$ the edges $uu_1$ and $uu_2$.

\begin{lem}\label{alpha-gupnpn}{\rm \cite{JFW-JW-MB}}
For $A_{\alpha}$-index of the graph sequence $\{G_u(P_n,P_n)\}_{n \in \N}$,

it has a limit point $\chi'_u(G) \geqslant 2$. If $\chi'_u(G)>2$, then $\chi'_u(G)$ is the largest root of the equation $\Theta(\lambda)_{G,u,\alpha, \infty}=0$, where
\begin{equation*}
\Theta(\lambda)_{G,u,\alpha, \infty}= \left(1-\alpha h(\lambda)_{\alpha}\right)\left(\phi(G)(1-\alpha h(\lambda)_{\alpha})-2\alpha\phi(G)_u+2(2\alpha-1)\phi(G)_uh(\lambda)_{\alpha}\right),\end{equation*}
and $h(\lambda)_{\alpha}$ is defined in \eqref{kazz1}.
\end{lem}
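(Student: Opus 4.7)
The plan is to establish convergence of $\rho_n:=\rho_{_{A_\alpha}}(G_u(P_n,P_n))$ first, then derive a closed-form expression for $\phi(G_u(P_n,P_n))$ by a Schur-complement reduction that exploits the symmetry swapping the two pendant paths, and finally take the $n\to\infty$ limit to identify the limit equation with $\Theta(\lambda)_{G,u,\alpha,\infty}=0$.

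For convergence, $G_u(P_n,P_n)$ is a proper subgraph of $G_u(P_{n+1},P_{n+1})$, so Lemma~\ref{alpha-delta}(ii) makes $\rho_n$ strictly increasing. The maximum degree in $G_u(P_n,P_n)$ equals $\max\{\Delta(G),\,d_G(u)+2\}$ independently of $n$, so Lemma~\ref{alpha-delta}(i) bounds $\rho_n$ above, forcing convergence to some $\chi'_u(G)$. Moreover, the two attached $P_n$'s glued at $u$ form a subgraph $P_{2n+1}$, so Lemma~\ref{alpha-delta}(ii) gives $\rho_n\geqslant \rho_{_{A_\alpha}}(P_{2n+1})\to 2$, yielding $\chi'_u(G)\geqslant 2$.

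For the heart of the proof, order the vertices of $G_u(P_n,P_n)$ as those of $G$ followed by the two attached paths. Then its $A_\alpha$-matrix takes the block form
\[
\begin{pmatrix} A_\alpha(G)+2\alpha E_{uu} & F & F \\ F^{T} & B_n & 0 \\ F^{T} & 0 & B_n \end{pmatrix},
\]
where $B_n$ is the matrix of Lemma~\ref{gongshi}, the term $2\alpha E_{uu}$ corrects the diagonal at $u$ for the two new incident edges, and $F$ is the $|V(G)|\times n$ matrix with a single non-zero entry $(1-\alpha)$ linking $u$ to the first vertex of the attached path. Applying the Schur complement relative to $\mathrm{diag}(B_n,B_n)$, and using $F(\lambda I-B_n)^{-1}F^{T}=(1-\alpha)^2 g_n(\lambda) E_{uu}$ with $g_n(\lambda):=\phi(B_{n-1})/\phi(B_n)$, a routine expansion yields
\[
\phi(G_u(P_n,P_n))=\phi(B_n)^2\bigl[\phi(G)-\bigl(2\alpha+2(1-\alpha)^2 g_n(\lambda)\bigr)\phi(G)_u\bigr].
\]
Setting $s=(\lambda-2\alpha+\Delta_{\lambda,\alpha})/2$ and $t=(\lambda-2\alpha-\Delta_{\lambda,\alpha})/2$, so that $s+t=\lambda-2\alpha$ and $st=(1-\alpha)^2$, the dominant-term analysis of Lemma~\ref{gongshi}(ii) gives, for $\lambda>2$, $\lim_{n\to\infty}g_n(\lambda)=1/s$, hence $2\alpha+2(1-\alpha)^2/s=\lambda-\Delta_{\lambda,\alpha}$. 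Cauchy interlacing delivers $\rho(B_n)\leqslant\rho_{_{A_\alpha}}(P_{n+1})<2$, so the factor $\phi(B_n)^2$ contributes no root above $2$ in the limit, and any $\chi'_u(G)>2$ must satisfy the limit equation $\phi(G)-(\lambda-\Delta_{\lambda,\alpha})\phi(G)_u=0$.

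To close the proof I would verify the factorisation
\[
\Theta(\lambda)_{G,u,\alpha,\infty}=\bigl(1-\alpha h(\lambda)_\alpha\bigr)^2\bigl[\phi(G)-(\lambda-\Delta_{\lambda,\alpha})\phi(G)_u\bigr].
\]
I expect this algebraic identity to be the main obstacle: rewriting $h(\lambda)_\alpha=(t+\alpha)/(\alpha(\lambda-2)+1)$ and matching coefficients on both sides reduces it to the identity $(s+\alpha)(t+\alpha)=\alpha(\lambda-2)+1$, which follows from $s+t=\lambda-2\alpha$ and $st=(1-\alpha)^2$ by direct expansion. Since the prefactor $(1-\alpha h(\lambda)_\alpha)^2$ is non-vanishing for $\lambda>2$ and generic $\alpha$, the largest root of $\Theta(\lambda)_{G,u,\alpha,\infty}=0$ coincides with the largest root of the limit equation, and hence equals $\chi'_u(G)$ as claimed.
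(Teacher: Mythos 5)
Since the paper states this lemma without proof (it is imported verbatim from \cite{JFW-JW-MB}), your attempt can only be judged on its own terms. Its computational core is sound: the block form of $A_\alpha(G_u(P_n,P_n))$ with diagonal correction $2\alpha E_{uu}$, the Schur-complement identity $\phi(G_u(P_n,P_n))=\phi(B_n)^2\bigl[\phi(G)-\bigl(2\alpha+2(1-\alpha)^2 g_n(\lambda)\bigr)\phi(G)_u\bigr]$ with $g_n=\phi(B_{n-1})/\phi(B_n)$, the limit $g_n(\lambda)\to 1/s$ for $\lambda>2$ (note Lemma~\ref{gongshi}(ii) is stated only for $\alpha\in(0,1)$, so $\alpha=0$ needs part (i) instead), and the factorisation $\Theta(\lambda)_{G,u,\alpha,\infty}=\bigl(1-\alpha h(\lambda)_\alpha\bigr)^2\bigl[\phi(G)-(\lambda-\Delta_{\lambda,\alpha})\phi(G)_u\bigr]$, which does follow from $1-\alpha h(\lambda)_\alpha=s/(s+\alpha)$ and $(s+\alpha)(t+\alpha)=\alpha(\lambda-2)+1$; I verified these identities directly. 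Passing to the limit along $\rho_n:=\rho_{A_\alpha}(G_u(P_n,P_n))$ should explicitly invoke locally uniform convergence of $g_n$ on compact subsets of $(2,\infty)$ (immediate from the explicit formula, since $0<t<s$ there), but that is presentational.

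The genuine gap is in your closing sentence. Your argument shows only that $\chi'_u(G)$ is \emph{a} root of $\phi(G)-(\lambda-\Delta_{\lambda,\alpha})\phi(G)_u=0$; nothing excludes a larger root, and the remark that the prefactor is non-vanishing is irrelevant to that question (incidentally it is non-vanishing for every $\alpha\in[0,1)$ and $\lambda>2$, not merely ``generic'' $\alpha$, since $1-\alpha h=s/(s+\alpha)>0$). The lemma asserts $\chi'_u(G)$ is the \emph{largest} root of $\Theta=0$, so this must be argued. A short repair: since $G$ is a proper subgraph of $G_u(P_1,P_1)$, Lemma~\ref{alpha-delta}(ii) gives $\chi'_u(G)>\rho_{A_\alpha}(G)$; on $(\rho_{A_\alpha}(G),\infty)$ both $\phi(G)$ and $\phi(G)_u$ are positive, the ratio $\phi(G)/\phi(G)_u=1/\bigl[(\lambda I-A_\alpha(G))^{-1}\bigr]_{uu}$ is strictly increasing (the diagonal resolvent entry is positive and strictly decreasing above the spectral radius), while $\lambda-\Delta_{\lambda,\alpha}=2\alpha+2t(\lambda)$ is positive and strictly decreasing because $\mathrm{d}\Delta_{\lambda,\alpha}/\mathrm{d}\lambda=(s+t)/(s-t)>1$. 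Hence $\phi(G)-(\lambda-\Delta_{\lambda,\alpha})\phi(G)_u$ has at most one zero in $(\rho_{A_\alpha}(G),\infty)$, which must be $\chi'_u(G)$, and every other real root of $\Theta=0$ lies below it. With this addition your proof is complete.
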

\begin{prop}\label{alpha-p2pnpn}
Let $P_2(P_n,P_n)$ be the graph depicted in Fig.~\ref{fig0}. Then,
$$ \rho\!_{_{A_{\alpha}}}\!(P_2(P_n,P_n))< \Psi(\alpha) = \lim\limits_{n\rightarrow\infty}\rho\!_{_{A{\alpha}}}\!(P_2(P_n,P_n)).$$
Moreover, $\Psi(\alpha)$ is the largest root of
\begin{multline}\label{kazz3} (1-\alpha h(\lambda)_{\alpha}) \left( (1-\alpha h(\lambda)_{\alpha})\lambda^2 \right.\\
\left. +2((\alpha^2+2\alpha-1)h(\lambda)_{\alpha} -2\alpha)\lambda -(6\alpha^2-3\alpha)h(\lambda)_{\alpha}+2\alpha^2+2\alpha-1 \right) .
\end{multline}
\end{prop}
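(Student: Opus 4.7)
The plan is to view $P_2(P_n,P_n)$ as the graph $G_u(P_n,P_n)$ appearing in Lemma~\ref{alpha-gupnpn}, with $G:=P_2$ and $u$ the endpoint of $P_2$ that ends up as the vertex of degree $3$ in $P_2(P_n,P_n)$ (the vertex $c_0$ in Fig.~\ref{fig0}). Lemma~\ref{alpha-gupnpn} then directly yields the existence of the limit $\chi'_u(P_2)=:\Psi(\alpha)\geqslant 2$, and, once the strict inequality $\Psi(\alpha)>2$ has been verified, identifies $\Psi(\alpha)$ with the largest root of $\Theta(\lambda)_{P_2,u,\alpha,\infty}=0$. The strict inequality $\rho_{_{A_{\alpha}}}(P_2(P_n,P_n))<\Psi(\alpha)$ follows at once: since $P_2(P_n,P_n)$ is a proper subgraph of $P_2(P_{n+1},P_{n+1})$, Lemma~\ref{alpha-delta}(ii) makes the sequence $\{\rho_{_{A_{\alpha}}}(P_2(P_n,P_n))\}_{n\in\N}$ strictly increasing, forcing every term to lie below its supremum.

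To secure $\Psi(\alpha)>2$ it is enough to exhibit a single $n_0$ with $\rho_{_{A_{\alpha}}}(P_2(P_{n_0},P_{n_0}))>2$. For $\alpha=0$, take $n_0=4$: the graph $P_2(P_4,P_4)$ is the tripod $T(1,4,4)$, which strictly contains the extended Dynkin diagram $\widetilde E_7=T(1,3,3)$ whose adjacency spectral radius equals $2$; thus Lemma~\ref{alpha-delta}(ii) forces $\rho_A(P_2(P_4,P_4))>2$, and Lemma~\ref{alpha-delta}(iii) extends the bound to every $\alpha\in[0,1)$.

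It then remains to identify $\Theta(\lambda)_{P_2,u,\alpha,\infty}$ with the polynomial displayed in \eqref{kazz3}. I would compute
\[
\phi(P_2)=(\lambda-\alpha)^2-(1-\alpha)^2=\lambda^2-2\alpha\lambda+(2\alpha-1),\qquad \phi(P_2)_u=\lambda-\alpha,
\]
reading $\phi(P_2)_u$ (as in Lemma~\ref{gongshi}) as the characteristic polynomial of the principal submatrix of $A_{\alpha}(P_2)$ obtained by deleting the row and column of $u$. Substituting these into the expression for $\Theta$ in Lemma~\ref{alpha-gupnpn} and collecting powers of $\lambda$ with $h=h(\lambda)_{\alpha}$, the coefficients $1-\alpha h$, $2((\alpha^2+2\alpha-1)h-2\alpha)$, and $2\alpha^2+2\alpha-1-(6\alpha^2-3\alpha)h$ of $\lambda^2$, $\lambda^1$, and $\lambda^0$ inside the inner bracket reproduce \eqref{kazz3} exactly.

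The main obstacle is only the bookkeeping of this last symbolic expansion --- linear in $h(\lambda)_{\alpha}$, quadratic in $\lambda$ --- the sort of routine but error-prone manipulation that the introduction earmarks for \emph{Mathematica}. A subsidiary care point is the convention on $\phi(G)_u$: with the alternative reading $\phi(P_2)_u=\lambda$ (the $A_{\alpha}$-polynomial of the induced subgraph $P_2-u$) the $\lambda^0$ contributions would be off by $2\alpha^2$ in the constant and by $-2\alpha(2\alpha-1)$ in the $h$-linear term, so the principal-submatrix convention is essential for recovering \eqref{kazz3}.
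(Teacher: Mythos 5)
Your proposal is correct and takes essentially the same route as the paper: it invokes Lemma~\ref{alpha-gupnpn} with $G=P_2$, gets the strict inequality from the monotonicity in Lemma~\ref{alpha-delta}(ii), verifies $\Psi(\alpha)>2$ through $P_2(P_4,P_4)$, and then substitutes $\phi(P_2)=(\lambda-\alpha)^2-(1-\alpha)^2$ and $\phi(P_2)_u=\lambda-\alpha$ (the paper's $\phi(B_1)$, i.e.\ the principal-submatrix convention you rightly insist on) into $\Theta(\lambda)_{P_2,u,\alpha,\infty}$, and the coefficients you list indeed reproduce \eqref{kazz3}. The only cosmetic difference is that you certify $\rho\!_{_{A_0}}\!(P_2(P_4,P_4))>2$ via the Smith graph $\widetilde{E}_7=T(1,3,3)$ rather than the paper's unspecified ``direct calculation''.
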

\begin{proof}
The graph $P_2(P_n,P_n)$ is of type $G_u(P_n,P_n)$, where $G=P_2$ and $u$ is any of its vertices.
Hence, $\Psi (\alpha)$ is well-defined by Lemma~\ref{alpha-gupnpn}.
Moreover, by Lemma \ref{alpha-internal}(ii) (or by Lemma~\ref{alpha-delta}(ii) as well), it turns out  that $\rho\!_{_{A_{\alpha}}}\!(P_2(P_n,P_n))$ grows up with $n$; hence, $\rho\!_{_{A_{\alpha}}}\!(P_2(P_n,P_n))<\Psi(\alpha)$.
From Lemma \ref{alpha-delta} and a direct calculation we get
$$\rho\!_{_{A_{\alpha}}}\! (P_2(P_n,P_n)) \geqslant \rho\!_{_{A_0}}\! (P_2(P_n,P_n)) \geqslant \rho\!_{_{A_0}}\! (P_2(P_4,P_4)) >2.$$
From (i) it follows   that $\lim\limits_{n\rightarrow\infty}\rho\!_{_{A_{\alpha}}}\!(P_2(P_n,P_n))$ is the largest root of the following equation:
\begin{equation}\label{zac} \left(1-\alpha h(\lambda)_{\alpha}\right)\left(\phi(P_2)
(1-\alpha h(\lambda)_{\alpha})
-2\alpha\phi(B_1)+2(2\alpha-1)\phi(B_1) h(\lambda)_{\alpha} \right)=0. \end{equation}
We now get  \eqref{kazz3} by plugging $\phi(P_2)=(\lambda-\alpha)^2-(1-\alpha)^2$ and $\phi(B_1)=\lambda-\alpha$ into \eqref{zac}.
\end{proof}

\noindent The software {\em Mathematica\textsuperscript{\textregistered}} provides the next closed-form expression for the function $\Psi(\alpha)$.
\begin{equation}\label{PSI}
\Psi(\alpha)=\frac{\;3\;}{2}\alpha+\frac{\;1\;}{\sqrt{6}}\left(g_0(\alpha)
+\frac{g_1(\alpha)}{g_4(\alpha)}+\frac{g_2(\alpha)}{\sqrt{g_5(\alpha)}} -g_4(\alpha)\right)^{\frac{1}{2}}+\sqrt{\frac{g_5(\alpha)}{12}},
\end{equation}
where $g_0(\alpha)=11\alpha^2-16\alpha+8$,\\[1.5mm]
{\phantom{where }}$g_1(\alpha)=(\alpha-1)^2(2\alpha^2+2\alpha-1)$,\\[1.5mm]
{\phantom{where }}$g_2(\alpha)=\sqrt{27}\alpha(7\alpha^2-12\alpha+6)$,\\[1.5mm]
{\phantom{where }}$g_3(\alpha)=11\alpha^6-86\alpha^5+275\alpha^4-432\alpha^3+358\alpha^2-150\alpha+25$.\\[1.5mm]
{\phantom{where }}$g_4(\alpha)=(1-\alpha)\left((\alpha-1)(17\alpha^2-52\alpha+26)) -\sqrt{27g_3(\alpha)} \right)^{\frac{1}{3}}$,\\[1.5mm]
{\phantom{where }}$\displaystyle g_5(\alpha)=g_0(\alpha)-2\left(\frac{g_1(\alpha)}{g_4(\alpha)}-g_4(\alpha)\right)$.\\

We remind the reader that the values of $g_3(\alpha)$ are not real for $\alpha \in [0,1)$. In fact, like other software packages, {\em Mathematica\textsuperscript{\textregistered}} always chooses the principal branch of fractional powers. This means that,  for every positive real number $a$,
$(-a)^{\frac{1}{3}}$ has to be read as the complex number $\sqrt[3]{a}{\rm e}^{i \frac{\pi}{3}}$. This implies, in particular, that
$$ g_4(0)= \frac{3}{2}+1 + i \left( \sqrt{3} + \frac{3}{2} \right), \qquad g_5(0)=12+6i,$$
and
$$ \begin{array}{ll} \Psi (0) &= \displaystyle \frac{\;1\;}{\sqrt{6}}\left(8
-\frac{1}{g_4(0)}-g_4(0)\right)^{\frac{1}{2}}+\sqrt{\frac{g_5(0)}{12}}  \\[2em]
& \displaystyle =\left( \frac{\sqrt{2+\sqrt{5}}}{2} - i   \frac{\sqrt{\sqrt{5}-2}}{2} \right) + \left( \frac{\sqrt{2+\sqrt{5}}}{2} + i   \frac{\sqrt{\sqrt{5}-2}}{2} \right) =  \sqrt{2+\sqrt{5}}.
\end{array}$$ as expected. The values assumed by $h_4$ in Lemma \ref{K13P5Pn} are computed by the same rule.

Let $G_u(P_n)$ be the graph obtained from two-vertex-disjoint graphs $G$ and $P_n$ by adding a new edge joining a vertex $u$ of $G$ with an end vertex of $P_n$.

\begin{lem}{\rm \cite{JFW-JW-MB}} The $A_{\alpha}$-spectral radius of the graph sequence $\{G_u(P_n)\}_{n \in \N}$ has a limit point $\chi_u(G) \geqslant 2$. Moreover,
\begin{itemize}
\item[$\mathrm{(i)}$]\label{exist}
 if $\chi_u(G)>2$, then $\chi_u(G)$ is the largest root of the equation $$\left(1-\alpha\cdot h(\lambda)_{\alpha} \right)\phi(G)-
\left(\alpha-(2\alpha-1)\cdot h(\lambda)_{\alpha}\right)\phi(G)_u=0,
$$
where $h(\lambda)_{\alpha}$ is defined in \eqref{kazz1};
\item[$\mathrm{(ii)}$]\label{K13P5Pn} if $G=K_{1,3}$ and $u$ is its vertex of degree $3$, then,
$$\lim\limits_{n\rightarrow\infty}\rho\!_{_{A_{\alpha}}}\!((K_{1,3})_u(P_n))=\frac{1}{2}\left(5\alpha+3\sqrt{2-4\alpha+3\alpha^2}\right).$$
\end{itemize}
\end{lem}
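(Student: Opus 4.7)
The plan is to emulate the strategy behind Lemma~\ref{alpha-gupnpn} (which handles the two-path case) and extract the limiting equation through a Schur-complement computation, then specialise to $G=K_{1,3}$ in part~(ii).

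First, I would establish the existence of $\chi_u(G)\ge 2$. For $n\ge 2$, passing from $G_u(P_n)$ to $G_u(P_{n+1})$ amounts to subdividing the edge $u_{n-1}u_n$, which is not on an internal path since $u_n$ has degree $1$. By Lemma~\ref{alpha-internal}(ii), $\rho\!_{_{A_\alpha}}\!(G_u(P_n))$ is therefore strictly increasing in $n$, and it is bounded above by $\Delta(G)+1$ via Lemma~\ref{alpha-delta}(i); hence the limit $\chi_u(G)$ exists. Whenever $u$ has a neighbour $v$ in $G$, the walk $v\,u\,u_1\cdots u_n$ shows that $P_{n+2}$ is a subgraph of $G_u(P_n)$, so Lemma~\ref{alpha-delta}(ii) and $\lim_{n\to\infty}\rho\!_{_{A_\alpha}}\!(P_{n+2})=2$ yield $\chi_u(G)\ge 2$; the trivial case $|V(G)|=1$ can be checked directly.

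For (i), I would decompose the vertex set of $G_u(P_n)$ as $V(G)\sqcup V(P_n)$. Since the degree of $u$ rises by $1$ upon attaching the path, the $V(G)\times V(G)$ block of $A_\alpha(G_u(P_n))$ is $A_\alpha(G)+\alpha E_{uu}$, while the $V(P_n)\times V(P_n)$ block is precisely the matrix $B_n$ from Lemma~\ref{gongshi}. A Schur complement together with the matrix-determinant lemma then gives
\begin{equation*}
\phi(G_u(P_n))=\phi(B_n)\phi(G)-\bigl(\alpha\,\phi(B_n)+(1-\alpha)^2\phi(B_{n-1})\bigr)\phi(G)_u,
\end{equation*}
where one uses the fact that the $(1,1)$-entry of $(\lambda I-B_n)^{-1}$ equals $\phi(B_{n-1})/\phi(B_n)$, because removing the first row and column of $B_n$ produces another $B$-type matrix. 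For $\lambda>2$, Lemma~\ref{gongshi}(ii) implies that $\phi(B_{n-1})/\phi(B_n)\to 1/s$ as $n\to\infty$; using the identity $st=(1-\alpha)^2$ (a consequence of $\Delta_{\lambda,\alpha}^2=(\lambda-2\alpha)^2-4(1-\alpha)^2$), the limiting equation reduces to
\begin{equation*}
\phi(G)-(\alpha+t)\phi(G)_u=0.
\end{equation*}
A final algebraic manipulation verifying $h(\lambda)_\alpha(s+\alpha)=1$ (equivalently $\alpha+t=(\alpha-(2\alpha-1)h(\lambda)_\alpha)/(1-\alpha h(\lambda)_\alpha)$) converts this into the form stated in the lemma.

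For (ii), I would expand $\phi(K_{1,3})=(\lambda-\alpha)^2(\lambda^2-4\alpha\lambda+6\alpha-3)$ and $\phi(K_{1,3})_u=(\lambda-\alpha)^3$, substitute into the simpler form $\phi(G)=(\alpha+t)\phi(G)_u$, and cancel $(\lambda-\alpha)^2$. The resulting equation $2(\lambda^2-4\alpha\lambda+6\alpha-3)=(\lambda-\Delta_{\lambda,\alpha})(\lambda-\alpha)$ isolates $\Delta_{\lambda,\alpha}(\lambda-\alpha)=-(\lambda^2-7\alpha\lambda+12\alpha-6)$; squaring and simplifying yields an expression that factors as $(-\alpha\lambda+2\alpha-1)(2\lambda^2-10\alpha\lambda-\alpha^2+18\alpha-9)$. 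The linear factor corresponds to a spurious root introduced by squaring (it is negative or below $2$ for $\alpha\in[0,1)$), so $\chi_u(K_{1,3})$ is the larger root of the quadratic, namely $\tfrac{1}{2}(5\alpha+3\sqrt{2-4\alpha+3\alpha^2})$.

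The main obstacles lie in (i): keeping the $\alpha E_{uu}$ correction visible throughout the Schur complement, and then reconciling the natural limit equation $\phi(G)=(\alpha+t)\phi(G)_u$ with the equation involving $h(\lambda)_\alpha$ via the identity $h(\lambda)_\alpha(s+\alpha)=1$ (which is really the statement $\lambda^2-\Delta_{\lambda,\alpha}^2=4(\alpha(\lambda-2)+1)$ in disguise). In (ii), the only subtlety is recognising and discarding the extraneous linear factor introduced by squaring.
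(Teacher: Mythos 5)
You should know at the outset that the paper contains no proof of this lemma: it is imported without proof from the companion survey \cite{JFW-JW-MB} (see the opening sentence of Section~3), so there is no in-paper argument to compare yours against. Judged on its own, your plan is sound, and it runs on exactly the machinery the paper uses elsewhere (the matrices $B_n$ and Lemma~\ref{gongshi}, the quantities $s,t$ with $st=(1-\alpha)^2$, and the identity $\lambda^2-\Delta_{\lambda,\alpha}^2=4(\alpha(\lambda-2)+1)$, i.e.\ $h(\lambda)_\alpha=1/(s+\alpha)$, which is the same substitution exploited in Section~4). I checked the computational backbone: the block decomposition with the $\alpha E_{uu}$ correction does give $\phi(G_u(P_n))=\phi(B_n)\phi(G)-\bigl(\alpha\phi(B_n)+(1-\alpha)^2\phi(B_{n-1})\bigr)\phi(G)_u$; the limit $\phi(B_{n-1})/\phi(B_n)\to 1/s$ holds for $\lambda>2$ (use Lemma~\ref{gongshi}(i) when $\alpha=0$, since (ii) is stated only for $\alpha\in(0,1)$); and $\alpha+t=(\alpha-(2\alpha-1)h(\lambda)_\alpha)/(1-\alpha h(\lambda)_\alpha)$ is correct. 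In (ii), your polynomials $\phi(K_{1,3})=(\lambda-\alpha)^2(\lambda^2-4\alpha\lambda+6\alpha-3)$ and $\phi(K_{1,3})_u=(\lambda-\alpha)^3$ are right, and after squaring one gets $4(-\alpha\lambda+2\alpha-1)(2\lambda^2-10\alpha\lambda-\alpha^2+18\alpha-9)$, so your factorization is correct up to the harmless factor $4$; the linear factor gives $\lambda=2-1/\alpha<2$ and is indeed spurious, while the larger quadratic root does satisfy the unsquared equation (the sign condition $\lambda^2-7\alpha\lambda+12\alpha-6\leqslant 0$ holds there), which is worth saying explicitly since squaring could in principle also make the retained root extraneous.

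The one genuine soft spot is the ``largest root'' claim in (i). Your argument shows that $\chi_u(G)$ is \emph{a} root of the limit equation, but passing to the limit in $F_n(\lambda)=\phi(G_u(P_n))(\lambda)/\phi(B_n)(\lambda)>0$ for $\lambda>\rho_{_{A_\alpha}}(G_u(P_n))$ only yields $F\geqslant 0$ on $(\chi_u(G),\infty)$, which does not by itself exclude a larger root. This is fixable with the tools you already have: for $\lambda>2$ the ratio $r_n=\phi(B_n)/\phi(B_{n-1})$ satisfies $r_{n+1}=(\lambda-2\alpha)-(1-\alpha)^2/r_n$ and increases to $s$, so $c_n(\lambda)=\alpha+(1-\alpha)^2/r_n$ decreases to $\alpha+t$; since interlacing gives $\phi(G)_u(\lambda)>0$ for $\lambda>\chi_u(G)$, the functions $F_n$ increase to $F$ there, whence $F(\lambda)\geqslant F_n(\lambda)>0$ for every $\lambda>\chi_u(G)$, and continuity (or locally uniform convergence on compact subsets of $(2,\infty)$) gives $F(\chi_u(G))=0$. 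Adding this short monotonicity step would make the sketch complete.
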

\begin{prop}\label{3.8}
Let $u$ be the middle vertex of degree 2 of $G=P_5$. Then,
$$\lim\limits_{n\rightarrow\infty}\rho\!_{_{A_{\alpha}}}\!((P_5)_u(P_n))=2\alpha+\frac{1}{2}(h_1+h_5)^{\frac{1}{2}}+
\frac{1}{2}\left(h_7-h_5+\frac{h_6}{4(h_1+h_5)^{\frac{1}{2}}}\right)^{\frac{1}{2}},$$
where $h_1=4-8\alpha-3\alpha^2,$  $h_2=19\alpha^2+8\alpha-4,$  $h_3=13\alpha^4-32\alpha^3+32\alpha^2-16\alpha+4,$ \\ $h_4=(-416+2496\alpha-6300\alpha^2+8560\alpha^3-6624\alpha^4+2784\alpha^5-502\alpha^6-(172800-2073600\alpha+11453184\alpha^2-
38499840\alpha^3+87733584\alpha^4-142826112\alpha^5+170398080\alpha^6-150197760\alpha^7+97143840\alpha^8-44993664\alpha^9+
14176512\alpha^{10}-2730240\alpha^{11}+243216\alpha^{12})^{\frac{1}{2}})^{\frac{1}{3}},$\\
$h_5=\frac{1}{3} \left( h_2+2^{\frac{1}{3}}\frac{h_3}{h_4}+2^{-\frac{1}{3}}h_4 \right),$ $h_6=512\alpha^3-32\alpha h_2+112(-\alpha+2\alpha^2+\alpha^3),$ $h_7=13\alpha^2-8\alpha+4$.

In particular $\rho\!_{_{A_0}}\!((P_5)_u(P_n))=\Psi(0)= \sqrt{2+\sqrt{5}}$.

\end{prop}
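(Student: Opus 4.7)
My plan is to apply the lemma immediately preceding the proposition (part (i)) with $G = P_5$ and $u = v_3$ the central vertex of degree $2$. Two ingredients are required: $\phi(P_5) = \phi(P_{4+1})$ is provided by Lemma~\ref{gongshi}(i) with $n = 4$, and $\phi(P_5)_u$ is the characteristic polynomial of $P_5 - v_3 = 2P_2$. Since the two components are both copies of $P_2$, one obtains $\phi(P_5)_u = \phi(P_2)^2 = \bigl((\lambda - \alpha)^2 - (1-\alpha)^2\bigr)^2$.

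Before invoking part (i), the hypothesis $\chi_u(P_5) > 2$ has to be verified. By Lemma~\ref{alpha-delta}(ii), the sequence $\{\rho_{A_\alpha}((P_5)_u(P_n))\}_{n}$ is strictly increasing in $n$, and by Lemma~\ref{alpha-delta}(iii), $\rho_{A_\alpha}((P_5)_u(P_n)) \geqslant \rho_{A_0}((P_5)_u(P_n))$ for every $\alpha \in [0,1)$. For $n = 3$ the graph $(P_5)_u(P_3)$ is a tree with a unique vertex of degree $3$ and three pendant paths of lengths $2$, $2$, and $4$; since this tree lies outside Smith's list of graphs with adjacency spectral radius at most $2$, we get $\rho_{A_0}((P_5)_u(P_3)) > 2$, whence $\chi_u(P_5) > 2$.

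Once the hypothesis is secured, substituting $\phi(P_5)$ and $\phi(P_5)_u$ into the equation furnished by part (i) yields
\begin{equation*}
(1 - \alpha\, h(\lambda)_\alpha)\, \phi(P_5) \;-\; \bigl(\alpha - (2\alpha - 1)\, h(\lambda)_\alpha\bigr)\, \phi(P_2)^2 \;=\; 0.
\end{equation*}
Replacing $h(\lambda)_\alpha$ by its explicit form from \eqref{kazz1}, clearing denominators, isolating the single occurrence of the radical $\Delta_{\lambda,\alpha}$, and squaring produces a rational polynomial equation in $\lambda$. After discarding the extraneous factors introduced by the squaring, one is left with a quartic in the shifted variable $y = \lambda - 2\alpha$.

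The main obstacle is the algebraic bookkeeping of this quartic and the matching of its largest real root with the stated closed form. The quantities $h_1,\ldots,h_7$ are precisely the intermediate data produced by Ferrari's method applied to the depressed quartic: $h_5$ is the real root of the cubic resolvent delivered by Cardano's formula (hence the pattern $h_5 = \tfrac{1}{3}(h_2 + 2^{1/3}\, h_3 / h_4 + 2^{-1/3}\, h_4)$), while the two nested square roots in the formula for the limit encode Ferrari's factorization of the quartic as a product of two quadratics. This final algebraic reduction is most conveniently carried out in {\em Mathematica\textsuperscript{\textregistered}}. As a sanity check, at $\alpha = 0$ the defining equation collapses to $\phi(P_5) - h(\lambda)_0\, \phi(P_2)^2 = 0$; factoring out $(\lambda^2 - 1)$ and squaring yields $y^2 - 4y - 1 = 0$ in $y = \lambda^2$, whose positive root gives $\lambda = \sqrt{2 + \sqrt{5}} = \Psi(0)$, as claimed.
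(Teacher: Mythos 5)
Your overall strategy is the same as the paper's (apply part (i) of the preceding lemma to $G=P_5$ with $u$ the central vertex, check the hypothesis $\chi_u(P_5)>2$ via monotonicity in $\alpha$ and $n$, then grind the resulting radical equation with {\em Mathematica\textsuperscript{\textregistered}}), but there is a genuine error in one of your two inputs: the identification $\phi(P_5)_u=\phi(P_2)^2=\bigl((\lambda-\alpha)^2-(1-\alpha)^2\bigr)^2$. In the $A_{\alpha}$ setting, $\phi(G)_u$ in that lemma is the characteristic polynomial of the \emph{principal submatrix} of $A_{\alpha}(G)$ obtained by deleting the row and column of $u$, not the $A_{\alpha}$-characteristic polynomial of the graph $G-u$. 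The diagonal entries are $\alpha\, d_G(v)$ with degrees computed in $G=P_5$, so the two neighbours of $u$ retain the diagonal entry $2\alpha$; each $2\times 2$ block is $\bigl(\begin{smallmatrix}\alpha & 1-\alpha\\ 1-\alpha & 2\alpha\end{smallmatrix}\bigr)$, giving $\phi(P_5)_u=(\lambda^2-3\alpha\lambda+\alpha^2+2\alpha-1)^2$, which is what the paper substitutes (this is also why the paper defines $B_n$ from $A_{\alpha}(P_{n+1})$ by deleting a row and column, rather than as $A_{\alpha}(P_n)$, and why $\phi(B_1)=\lambda-\alpha$ rather than $\lambda$ earlier in Section~3). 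Your polynomial agrees with the correct one only at $\alpha=0$, which is precisely why your sanity check at $\alpha=0$ passes while the ensuing quartic, and hence the closed form you would extract by Ferrari's method, would disagree with the stated $h_1,\dots,h_7$ for every $\alpha\in(0,1)$.

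Two minor points: for $n=3$ the tree $(P_5)_u(P_3)$ has pendant arms of lengths $2,2,3$ (not $2,2,4$); the conclusion $\rho_{A_0}>2$ still holds since it properly contains the spider with arms $2,2,2$, so your verification of the hypothesis is fine in substance. With $\phi(P_5)_u$ corrected, the rest of your outline (isolating $\Delta_{\lambda,\alpha}$, squaring, and reading off the largest root via the resolvent-cubic/Ferrari data encoded in $h_1,\dots,h_7$) matches the paper's computation.
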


\begin{proof}
The inequalities
$$\rho\!_{_{A_{\alpha}}}\! ((P_5)_u(P_n)) \geqslant \rho\!_{_{A_0}}\! ((P_5)_u(P_n)) \geqslant \rho\!_{_{A_0}}\! ((P_5)_u(P_2)) >2.$$
hold for every $n \geqslant 3$
by Lemma~\ref{alpha-delta} and a direct calculation. Since
$$\rho\!_{_{A_{\alpha}}}\! (K_{1,3}(P_n)) \geqslant \rho\!_{_{A_0}}\! (K_{1,3}(P_n)) \geqslant \rho\!_{_{A_0}}\! (K_{1,3}(P_2)) >2,$$
it follows from (i) that $\lim\limits_{n\rightarrow\infty}\rho\!_{_{A_{\alpha}}}\!((P_5)_u(P_n))$ is the largest root of the  equation
\begin{equation}\label{xxx}
(1-\alpha h(\lambda)_{\alpha})\phi(P_5)
-(\alpha-(2\alpha-1)h(\lambda)_{\alpha})\phi(P_5)_u=0.
\end{equation}
The result comes with the help of {\em Mathematica\textsuperscript{\textregistered}}, once we substitute
$$\phi(P_5)=(\lambda^2-3\alpha\lambda+\alpha^2+2\alpha-1)(\lambda^3-5\alpha \lambda^2+(5\alpha^2+6\alpha -3)\lambda-8\alpha^2+4\alpha),$$
and $\phi(P_5)_u=(\lambda^2-3\alpha\lambda+\alpha^2+2\alpha-1)^2$.

For $\alpha=0$, \eqref{xxx} becomes $\phi(P_5) -\left( \phi(P_2)\right)^2h(\lambda)_0$
whose largest root is $\sqrt{2+\sqrt{5}}$ as already stated in \cite[p. 171]{hoffman}.
\end{proof}

\section{Proofs of Theorems \ref{Aa-main1} and \ref{Aa-main2}}
The next four Lemmas relates the $A_{\alpha}$-spectral radius and $A_{\alpha}$-limit points with  some structural conditions on the graph $G$.
\begin{lem}\label{no-TC}
If $G$ is a connected graph that is neither a tree nor a cycle, then $\rho\!_{_{A_{\alpha}}}\!(G) \geqslant \Psi(\alpha)$.
\end{lem}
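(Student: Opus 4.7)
The overall plan is to exhibit inside $G$ a tadpole subgraph (a cycle $C_k$ with a pendant path of length $\ell \geqslant 1$, which I will denote $T(k,\ell)$) and then to establish that every such tadpole has $A_\alpha$-spectral radius at least $\Psi(\alpha)$. Lemma \ref{alpha-delta}(ii) then gives $\rho_{A_\alpha}(G) \geqslant \rho_{A_\alpha}(T(k,\ell)) \geqslant \Psi(\alpha)$.

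First I would find a tadpole subgraph by a brief structural analysis. Since $G$ is connected and not a tree, it contains a cycle; since $G$ is not a cycle, either (a) $G$ has a pendant vertex, in which case the shortest path from the pendant to any cycle of $G$, together with that cycle, is the desired tadpole subgraph; or (b) $G$ has minimum degree $\geqslant 2$ and a vertex of degree $\geqslant 3$, in which case $G$ contains two cycles and hence (by a short case analysis on whether the two cycles share no vertex, exactly one vertex, or a path) contains as a subgraph a dumbbell, a figure-eight, or a theta graph. In each of the latter three cases a tadpole can be identified as a subgraph by retaining one of the two cycles entirely and using a portion of the other cycle's edges as the pendant path.

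Second, I would prove $\rho_{A_\alpha}(T(k,\ell)) > \Psi(\alpha)$ for all $k \geqslant 3$, $\ell \geqslant 1$. By Lemma \ref{alpha-delta}(ii) it suffices to treat $\ell = 1$. The cycle portion of $T(k,1)$ (minus its unique degree-$3$ vertex) is an internal path of type I, and since $(T(k,1),\alpha) \neq (DS_n,0)$, Lemma \ref{alpha-internal}(iii) shows that the sequence $\{\rho_{A_\alpha}(T(k,1))\}_{k \geqslant 3}$ is strictly decreasing with a limit $L(\alpha) \geqslant 2$. Intuitively, as $k \to \infty$ the tadpole $T(k,1)$ looks, from the perspective of its degree-$3$ vertex, like $P_2(P_n,P_n)$: the two arcs of the cycle behave as the two attached paths, and the pendant vertex plays the role of the remaining leaf of $P_2$. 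Hence $L(\alpha) = \lim_n \rho_{A_\alpha}(P_2(P_n,P_n)) = \Psi(\alpha)$, and strict monotonicity yields $\rho_{A_\alpha}(T(k,1)) > \Psi(\alpha)$ for every finite $k$. Combined with Lemma \ref{alpha-delta}(ii) applied to $T(k,1) \subseteq T(k,\ell) \subseteq G$, this completes the argument.

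The main obstacle is rigorously pinning down the limit identification $L(\alpha) = \Psi(\alpha)$. Lemma \ref{alpha-gxy} does not apply verbatim, since the cycle in $T(k,1)$ is a type-I internal path rather than a path between two distinct high-degree vertices; and Lemma \ref{alpha-gupnpn}, which controls the sequence $P_2(P_n,P_n)$, refers to a tree rather than to a unicyclic graph. A natural workaround is to realize $T(k,1)$ as the graph obtained from $P_2(P_m,P_m)$ (with $k \approx 2m$) by identifying the two leaf endpoints of the long paths, and then to show that the spectral perturbation produced by that identification vanishes as $m \to \infty$. This decay is plausible because the Perron eigenvector of $P_2(P_m,P_m)$ associated with $\rho_{A_\alpha}$ decreases geometrically along the long legs (consistent with the transfer-matrix structure exposed in Lemma \ref{gongshi}), so its values at the leaves — precisely where the identification takes effect — become negligible. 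Making this rigorous would likely mirror the technique behind Proposition \ref{alpha-p2pnpn}, writing down the characteristic equation of $T(k,1)$ in terms of $\phi(P_2)$, $\phi(B_{k-1})$, and $h(\lambda)_\alpha$, and then letting $k \to \infty$ inside that explicit equation.
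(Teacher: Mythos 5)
Your skeleton is the same as the paper's: locate a tadpole (cycle plus pendant path) inside $G$, use Lemma \ref{alpha-delta}(ii) to reduce to the tadpole with pendant path of length $1$, use Lemma \ref{alpha-internal}(iii) to see that subdividing the cycle strictly decreases the $A_\alpha$-spectral radius, and then compare with the sequence $P_2(P_n,P_n)$. The structural step producing the tadpole is fine. The genuine gap is the identification $L(\alpha)=\lim_k\rho_{_{A_\alpha}}(T(k,1))=\Psi(\alpha)$: as you yourself observe, neither Lemma \ref{alpha-gxy} nor Lemma \ref{alpha-gupnpn} applies to this unicyclic sequence, and what you offer in its place (geometric decay of the Perron eigenvector at the identified leaves, or an unperformed characteristic-equation computation ``mirroring'' Proposition \ref{alpha-p2pnpn}) is a plan, not a proof. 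As written, the inequality $\rho_{_{A_\alpha}}(T(k,1))>\Psi(\alpha)$ therefore rests on an unproved step.

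The fix is much cheaper than the perturbation analysis you sketch, because you never need the exact value of $L(\alpha)$ — only the one-sided bound $L(\alpha)\geqslant\Psi(\alpha)$, and that follows from a subgraph containment you overlooked. Write $L_n$ for the cycle $C_{n-1}$ with a pendant edge (your $T(n-1,1)$). Deleting the cycle edge farthest from the degree-$3$ vertex shows that $L_n$ contains $P_2\bigl(P_{\lfloor\frac{n-2}{2}\rfloor},P_{\lfloor\frac{n-2}{2}\rfloor}\bigr)$ as a subgraph, so Lemma \ref{alpha-delta}(ii) gives $\rho_{_{A_\alpha}}(L_n)>\rho_{_{A_\alpha}}\bigl(P_2(P_{\lfloor\frac{n-2}{2}\rfloor},P_{\lfloor\frac{n-2}{2}\rfloor})\bigr)$. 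Combining this with the strict decrease $\rho_{_{A_\alpha}}(L_n)>\rho_{_{A_\alpha}}(L_{n+1})$ from Lemma \ref{alpha-internal}(iii), one gets, for the $L_m\subseteq G$ produced by your structural step,
$$\rho_{_{A_\alpha}}(G)\geqslant\rho_{_{A_\alpha}}(L_m)>\lim_{n\rightarrow\infty}\rho_{_{A_\alpha}}(L_n)\geqslant\lim_{n\rightarrow\infty}\rho_{_{A_\alpha}}\bigl(P_2(P_{\lfloor\frac{n-2}{2}\rfloor},P_{\lfloor\frac{n-2}{2}\rfloor})\bigr)=\Psi(\alpha),$$
the last equality being Proposition \ref{alpha-p2pnpn}. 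This is exactly the paper's argument; it closes your gap with no eigenvector decay estimate and no limit identification for the tadpole sequence.
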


\begin{proof}
For each $ n \geqslant 4$, let $L_n$ be the graph obtained from the cycle $C_{n-1}$ by adding a pendant edge. It is easily seen that $L_n$ contains $P_2(P_{\lfloor \frac{n-2}{2} \rfloor},P_{\lfloor\frac{n-2}{2}\rfloor})$ as subgraph.

In our hypotheses, $G$ contains a subgraph isomorphic to $L_m$ for a suitable integer $m \geqslant 4$.
Thus, $\rho\!_{_{A_{\alpha}}}\!(G) \geqslant \rho\!_{_{A_{\alpha}}}\!(L_m) >
\rho\!_{_{A_{\alpha}}}\!(P_2(P_{\lfloor\frac{m-2}{2} \rfloor},P_{\lfloor\frac{m-2}{2}\rfloor}))$, by Lemma \ref{alpha-delta}(ii).
Lemma \ref{alpha-internal} yields $\rho\!_{_{A_{\alpha}}}\!(L_n)>\rho\!_{_{A_{\alpha}}}\!(L_{n+1})$. Hence,
$$\rho\!_{_{A_{\alpha}}}\!(G) \geqslant \rho\!_{_{A_{\alpha}}}\!(L_m)> \lim\limits_{n\rightarrow\infty}\rho\!_{_{A_{\alpha}}}\!(L_n)\geqslant\lim\limits_{n\rightarrow\infty}\rho\!_{_{A_{\alpha}}}\!(P_2(P_{\lfloor\frac{n-2}{2}\rfloor},P_{\lfloor\frac{n-2}{2}\rfloor}))=\Psi(\alpha),$$
where the last equality comes from  Proposition~\ref{alpha-p2pnpn}.
\end{proof}

Let $\mathcal S$ be any infinte set. By `almost all elements of $\mathcal S$' we mean `all elements of $\mathcal S$ apart from a finite number of them'.
\begin{lem}\label{gnam} Let $\mathcal G= \{ G_a\}_{a\in \N} $ be a sequence of graphs such that $\lim_{a\rightarrow\infty}\rho\!_{_{A_{\alpha}}}\!(G_a) < \Psi(\alpha)$. Then $\Delta(G_a) \leqslant 4$ for almost all $a \in \N$.
\end{lem}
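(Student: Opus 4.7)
My plan is to argue by contradiction. Suppose $\Delta(G_a) \geq 5$ for infinitely many indices $a$; by passing to a subsequence we may assume this holds for every $a$. Each such $G_a$ contains a copy of $K_{1,5}$ as a subgraph (take a vertex of degree at least $5$ together with five of its neighbours and the joining edges). Hence, by Lemma~\ref{alpha-delta}(ii) (with equality in the special case $G_a=K_{1,5}$), we obtain the uniform lower bound
\[
\rho_{A_\alpha}(G_a) \;\geq\; \rho_{A_\alpha}(K_{1,5}) \;=\; \tfrac{1}{2}\!\left(6\alpha + \sqrt{36\alpha^2 + 20(1-2\alpha)}\right),
\]
the closed form being a direct eigenvalue computation on the star (equivalently, the extremal case of Lemma~\ref{alpha-delta}(i)).

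The crux of the proof is then the inequality
\[
\rho_{A_\alpha}(K_{1,5}) \;>\; \Psi(\alpha) \qquad \text{for every } \alpha \in [0,1).
\]
Once this is established, it combines with the display above to give $\lim_{a\to\infty}\rho_{A_\alpha}(G_a) \geq \rho_{A_\alpha}(K_{1,5}) > \Psi(\alpha)$, contradicting the hypothesis and closing the argument.

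To establish the crux I would split $[0,1)$ into $[\tfrac12,1)$ and $[0,\tfrac12)$. The range $[\tfrac12,1)$ is essentially free: since $\Psi$ is strictly increasing with $\Psi(1)=3$ we have $\Psi(\alpha)<3$, whereas by Lemma~\ref{alpha-delta}(iii) the value $\rho_{A_\alpha}(K_{1,5})$ is strictly increasing in $\alpha$ with $\rho_{A_{1/2}}(K_{1,5})=3$, so $\rho_{A_\alpha}(K_{1,5}) \geq 3 > \Psi(\alpha)$ throughout $[\tfrac12,1)$. For $\alpha \in [0,\tfrac12)$ I would invoke the polynomial characterization of $\Psi(\alpha)$ from Proposition~\ref{alpha-p2pnpn}: $\Psi(\alpha)$ is the largest real root of the polynomial in \eqref{kazz3}. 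Substituting $\lambda = \rho_{A_\alpha}(K_{1,5})$ into that polynomial and simplifying with \emph{Mathematica\textsuperscript{\textregistered}} yields an expression which I expect to share sign with the leading coefficient throughout $[0,\tfrac12)$; this forces $\rho_{A_\alpha}(K_{1,5})$ to lie strictly to the right of the largest root $\Psi(\alpha)$. The endpoint $\alpha=0$ provides a reassuring sanity check via the explicit inequality $\sqrt{5}>\sqrt{2+\sqrt{5}}$.

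The main obstacle is this computational verification on $[0,\tfrac12)$: the closed form \eqref{PSI} for $\Psi(\alpha)$ is far too bulky for direct hand comparison, so one must work through \eqref{kazz3} instead. Continuity of both $\rho_{A_\alpha}(K_{1,5})$ and $\Psi(\alpha)$ in $\alpha$ reduces the task to showing that the polynomial evaluation never vanishes on $[0,\tfrac12)$, after which an intermediate-value argument together with the endpoint sign at $\alpha = 0$ seals the strict inequality.
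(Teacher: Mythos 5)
Your skeleton is the same as the paper's: the paper's proof of this lemma is exactly the observation that $\Delta(G_a)\geqslant 5$ forces, via the lower bound of Lemma~\ref{alpha-delta}(i) (equivalently, containment of the star $K_{1,5}$), a spectral radius that beats the threshold, and the paper only displays the chain $\Psi(0)<\sqrt{5}\leqslant\sqrt{\Delta(G_a)}\leqslant\rho\!_{_{A_0}}\!(G_a)$. You correctly identify that for general $\alpha$ one must beat $\Psi(\alpha)$, not $\Psi(0)$, and your reduction to the single inequality $\rho\!_{_{A_\alpha}}\!(K_{1,5})>\Psi(\alpha)$ on $[0,1)$, with the range $[\tfrac12,1)$ settled by $\rho\!_{_{A_{1/2}}}\!(K_{1,5})=3>\Psi(\alpha)$ and Lemma~\ref{alpha-delta}(iii), is sound.

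There are two soft spots on $[0,\tfrac12)$. First, the inference ``the evaluation of \eqref{kazz3} at $\lambda=\rho\!_{_{A_\alpha}}\!(K_{1,5})$ has the sign of the leading coefficient, hence this value lies to the right of the largest root'' is not valid as stated: positivity at a single point does not place that point beyond the largest root, since the function may be positive between roots. Your follow-up scheme (show the evaluation never vanishes for $\alpha\in[0,\tfrac12)$ and propagate the sign of $\rho\!_{_{A_\alpha}}\!(K_{1,5})-\Psi(\alpha)$ from $\alpha=0$ by continuity) is the correct repair, but as written it is only a promised \emph{Mathematica} verification, not a proof. Second, the computation can be avoided altogether: both sides are monotone in $\alpha$ ($\rho\!_{_{A_\alpha}}\!(K_{1,5})$ by Lemma~\ref{alpha-delta}(iii), and $\Psi$ is strictly increasing), so for $\alpha\in[0,\tfrac12)$,
\[
\rho\!_{_{A_\alpha}}\!(K_{1,5}) \;\geqslant\; \rho\!_{_{A_0}}\!(K_{1,5}) \;=\; \sqrt{5} \;>\; 1+\frac{\varepsilon}{2} \;=\; \Psi\Bigl(\frac12\Bigr) \;>\; \Psi(\alpha),
\]
where $2\Psi(1/2)=2+\varepsilon\approx 4.38$ is the value recorded in Section 5. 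With this replacement your argument closes completely, and it is in fact more careful than the paper's one-line proof, which only spells out the comparison at $\alpha=0$.
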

\begin{proof} The statement directly follows from Lemma~\ref{alpha-delta}(i). In fact, for each $G_a$ such that $\Delta(G_a) \geqslant 5$,  we have
\[\sqrt{2+\sqrt{5}} = \Psi(0) < \sqrt{5} \leq  \sqrt{\Delta(G_a)} \leq \rho\!_{_{A_0}}\!(G_a). \qedhere
  \popQED
\]
\end{proof}

As usual, we denote by $\diam (G)$ the diameter of $G$, by $N_G(u)$ the neighbourhood of $u$ in $G$, i.e.\ the set of vertices in $V(G)$ adjacent to $u$, and by $d(u,v)$ the number of edges in a shortest path connecting $u$ and $v$.

\begin{lem}\label{gnam2}  Let $\mathcal G= \{ G_a\}_{a\in \N} $ be a sequence of graphs such that $\lim\limits_{a\rightarrow\infty}\rho\!_{_{A_{\alpha}}}\!(G_a) < \Psi(\alpha)$. The set of diameters $\{ \diam (G_a) \, | \, a \in \N \}$ is not bounded above.
\end{lem}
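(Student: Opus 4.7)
The plan is to argue by contradiction. Suppose, against the conclusion, that there exists an integer $D$ such that $\diam(G_a)\leq D$ for every $a\in\N$. By Lemma~\ref{gnam} we already know that $\Delta(G_a)\leq 4$ for all but finitely many indices $a$, so after deleting a finite initial segment I may assume the two uniform bounds
\[
\Delta(G_a)\leq 4 \qquad\text{and}\qquad \diam(G_a)\leq D
\]
hold simultaneously for every $a$.

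Next I bound the order of each $G_a$ by a standard breadth-first-search argument. Fixing any root vertex $v_a\in V(G_a)$, every other vertex is reached within $D$ BFS layers, and (since $\Delta\leq 4$) the $i$-th layer contains at most $4\cdot 3^{i-1}$ new vertices. Hence
\[
|V(G_a)|\;\leq\;1+\sum_{i=0}^{D-1}4\cdot 3^{i}\;=\;2\cdot 3^{D}-1.
\]
Thus, up to isomorphism, the $G_a$ range over a finite list of graphs, and the set $\{\rho\!_{_{A_{\alpha}}}\!(G_a)\mid a\in\N\}$ is therefore finite.

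The final step is to extract the contradiction. By the very definition of an $A_\alpha$-limit point recalled in the introduction, a meaningful candidate sequence must yield pairwise distinct spectral radii; so the $G_a$'s (or at least an infinite subsequence of them) have pairwise distinct $\rho\!_{_{A_{\alpha}}}\!(G_a)$, and this is incompatible with the finiteness just established. The conclusion $\sup_a \diam(G_a)=\infty$ follows.

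I expect no serious technical obstacle: Lemma~\ref{gnam} has already packaged the crucial spectral-radius-to-max-degree implication, and what remains is the elementary BFS ball-size estimate together with the distinct-radii requirement from the definition of a limit point. The only delicate point worth flagging in the write-up is making that distinctness hypothesis explicit, since without it the literal statement would be defeated by trivial constant sequences such as $G_a\equiv K_3$.
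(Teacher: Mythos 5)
Your proof is correct and follows essentially the same route as the paper: Lemma~\ref{gnam} gives $\Delta(G_a)\leq 4$ for almost all $a$, the order of each $G_a$ is bounded by a function of maximum degree and diameter, and distinctness then forces the orders (hence diameters) to be unbounded. The paper uses the cruder bound $\lvert V(G_a)\rvert \leq \Delta(G_a)^{\diam(G_a)}+1$ and appeals to the graphs being pairwise distinct rather than to distinct spectral radii, so your closing remark about the implicit distinctness hypothesis is apt — the paper's own proof relies on it exactly as yours does.
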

\begin{proof}
It is well-known that
$ \lvert V(G_a) \rvert \leqslant  \Delta(G_a)^{\diam(G_a)}+1$ (see, for instance, \cite{hoffman}).

By Lemma~\ref{gnam}, we have
$$ \lvert V(G_a) \rvert \leqslant  4^{\diam(G_a)}+1$$
for almost all $n \in \N$. Since the graphs in the sequence  $\{ G_a\}_{a\in \N} $ are pairwise distinct, the set $\{ \lvert V(G_a) \rvert \; | a \in \N \}$ cannot be bounded above.
\end{proof}

\begin{lem}~\label{gnam3} Let $T$ be a tree with at least three vertices of degree $3$. Then,
 $\rho\!_{_{A_{\alpha}}}\!(T) \geqslant \Psi(\alpha)$.
 \end{lem}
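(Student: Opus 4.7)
Proof proposal.

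The plan is to find, inside $T$, a subgraph in \emph{line configuration} (three degree-$3$ vertices $v_1,v_2,v_3$ with $v_2$ between $v_1$ and $v_3$, plus pendant edges), and then to lower-bound its $A_\alpha$-spectral radius via a two-parameter limit computation. Pick $u_1,u_2,u_3\in V(T)$ of degree $3$ and consider their Steiner subtree $S\subseteq T$. If $S$ is a path, one of the three vertices (say $v_2$) lies between the other two $v_1,v_3$. Otherwise $S$ is a ``Y'' with a branching vertex $w\in V(T)$ of degree at least $3$; in this case I first pass to the subgraph $T_0\subseteq T$ consisting of $S$ together with two pendant edges at each $u_i$, so that $\deg_{T_0}(w)=3$, and observe that inside $T_0$ the triple $\{w,u_1,u_2\}$ now has a \emph{line}-configured Steiner subtree (with $w$ in the middle). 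Either way, a line-configured triple $v_1,v_2,v_3$ sits inside some subtree $\widetilde{T}\subseteq T$, and I define $T^\star\subseteq\widetilde{T}$ to be the path $v_1\cdots v_2\cdots v_3$ together with two pendant edges at each of $v_1,v_3$ and one at $v_2$. By Lemma~\ref{alpha-delta}(ii), $\rho\!_{_{A_{\alpha}}}\!(T)\geq\rho\!_{_{A_{\alpha}}}\!(T^\star)$, so the task reduces to showing $\rho\!_{_{A_{\alpha}}}\!(T^\star)\geq\Psi(\alpha)$.

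Let $l_1,l_2$ denote the lengths of the internal paths $v_1\cdots v_2$ and $v_2\cdots v_3$ in $T^\star=T^\star_{l_1,l_2}$. By Lemma~\ref{alpha-internal}(iii), each subdivision of an internal edge strictly decreases $\rho\!_{_{A_{\alpha}}}\!$, so $\rho\!_{_{A_{\alpha}}}\!(T^\star_{l_1,l_2})$ is componentwise monotone decreasing in $(l_1,l_2)$, and hence
$$
\rho\!_{_{A_{\alpha}}}\!(T^\star_{l_1,l_2})\;\geq\;\lim_{l_1,l_2\to\infty}\rho\!_{_{A_{\alpha}}}\!(T^\star_{l_1,l_2}) \;=:\; L.
$$
I would evaluate $L$ by applying Lemma~\ref{alpha-gxy} twice in succession: the first passage (extending $v_1v_2$) splits the graph into an ``end-piece'' at $v_1$---namely $P_3$ with $v_1$ at the centre---and a ``right-piece''; the second passage (extending $v_2v_3$) splits the right-piece into a ``middle-piece'' $P_2=v_2q$ (with $q$ the sole pendant of $v_2$) and another end-piece at $v_3$. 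The three boundary contributions entering $L$ are the middle limit $\chi'_{v_2}(P_2)=\Psi(\alpha)$ (Proposition~\ref{alpha-p2pnpn}) and the two end limits $\chi_{v_i}(P_3)$ for $i=1,3$. For the latter, the graph $(P_3)_{v_i}(P_n)$ embeds as a subgraph of $P_2(P_n,P_n)$ for every $n\geq 1$: map the two length-one pendants of $v_i$ to the vertex $v$ of $P_2$ and to the first vertex of one of the two $P_n$s, and map the attached $P_n$ to the other one. Lemma~\ref{alpha-delta}(ii) and passage to the limit then yield $\chi_{v_i}(P_3)\leq\Psi(\alpha)$. Hence $L=\max\{\chi_{v_1}(P_3),\,\Psi(\alpha),\,\chi_{v_3}(P_3)\}=\Psi(\alpha)$, as needed.

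The main technical obstacle is the rigorous iterated application of Lemma~\ref{alpha-gxy}: the right-piece after the first split itself depends on $l_2$, so strictly speaking one computes $\lim_{l_2\to\infty}\lim_{l_1\to\infty}\rho\!_{_{A_{\alpha}}}\!(T^\star_{l_1,l_2})$ and must identify this with the joint limit $L$. The componentwise monotonicity of $\rho\!_{_{A_{\alpha}}}\!(T^\star_{l_1,l_2})$ in $l_1$ and $l_2$, combined with the existence of $\chi_u$ and $\chi'_u$ as monotone limits in their own $P_n$ parameters, permits the interchange of limits and makes the whole computation well-defined.
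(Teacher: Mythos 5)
Your argument is correct in substance, but its second half takes a genuinely different (and heavier) route than the paper's. The reduction is the same: both proofs pass to the subtree consisting of the path joining two degree-$3$ vertices through a middle degree-$3$ vertex, with two pendant edges at each end and one at the middle (your $T^\star_{l_1,l_2}$ is exactly the paper's minimal subtree $T'$); you additionally treat the Y-shaped Steiner-tree case explicitly, which the paper compresses into a ``without loss of generality'' -- a small but welcome clarification. The divergence is in how the bound $\rho\!_{_{A_{\alpha}}}\!(T^\star_{l_1,l_2})\geqslant\Psi(\alpha)$ is obtained. The paper never identifies the limit of $\rho\!_{_{A_{\alpha}}}\!(T^\star_{l_1,l_2})$: it only uses Lemma~\ref{alpha-internal}(iii) to say that lengthening the internal paths decreases the index, observes that the lengthened tree contains $P_2(P_k,P_k)$, and concludes by Lemma~\ref{alpha-delta}(ii) and Proposition~\ref{alpha-p2pnpn}. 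You instead compute the joint limit $L$ exactly, via a double application of Lemma~\ref{alpha-gxy} plus the upper bounds $\chi_{v_i}(P_3)\leqslant\Psi(\alpha)$. That buys the extra fact $L=\Psi(\alpha)$, but at a cost: Lemma~\ref{alpha-gxy} is stated for a single lengthening path between two \emph{fixed} graphs, and in your second application the ``right-piece'' depends on $l_2$, so the relevant double sequence is increasing in the attached-path parameter and decreasing in $l_2$; for such mixed monotonicity the interchange of limits is not automatic, and your closing appeal to ``componentwise monotonicity'' covers the first interchange (both indices decreasing) but not this one without further argument. The gap is fixable, and in fact avoidable inside your own framework: since only $L\geqslant\Psi(\alpha)$ is needed, note that $T^\star_{k,k}\supseteq P_2(P_k,P_k)$, whence by Lemma~\ref{alpha-delta}(ii)
\[ L=\lim_{k\rightarrow\infty}\rho\!_{_{A_{\alpha}}}\!(T^\star_{k,k})\geqslant\lim_{k\rightarrow\infty}\rho\!_{_{A_{\alpha}}}\!(P_2(P_k,P_k))=\Psi(\alpha), \]
which is precisely the paper's one-sided argument and renders the bounds $\chi_{v_i}(P_3)\leqslant\Psi(\alpha)$ and the exact evaluation of $L$ unnecessary.
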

 \begin{proof} Let $u,v$ and $w$ the vertices in $T$ of degree $3$. Without loss of generality, we can assume that $v$ is the vertex which lies on the path between $u$ and $w$. Consider the minimal subtree $T'$ whose vertex-set contains $\{ u,v,w\} \cup N_T(u) \cup N_T(v) \cup N_T(w)$. Then, it is easy that $P_2(P_m,P_m)$ is a subgraph of $T'$,
  where $m = \min \{ d(v,u), d(v,w) \}$ and $v \in V(P_2(P_m,P_m))$. Since  the path connecting $u$ and $v$ and the one connecting $v$ and $w$ are internal for $T'$, from Lemmas \ref{alpha-delta}(ii), \ref{alpha-internal} and Proposition~\ref{alpha-p2pnpn}, it follows that
\[ \rho\!_{_{A_{\alpha}}}\!(T) \geqslant \rho\!_{_{A_{\alpha}}}\!(T') \geqslant \lim\limits_{k\rightarrow\infty}\rho\!_{_{A_{\alpha}}}\!(P_2(P_k,P_k)) = \Psi(\alpha), \qedhere
  \popQED \]
 \end{proof}

We now have all tools needed to prove the main theorems in this paper.
Let $\mathcal G= \{ G_a\}_{a\in \N} $ be a sequence of graphs such that $\lim_{a\rightarrow\infty}\rho\!_{_{A_{\alpha}}}\!(G_a) < \Psi(\alpha)$.
From Lemma \ref{no-TC} it follows that,  from a suitable integers $\bar{a}$ on, the  $G_a$'s are all trees or  cycles.

Let $\Delta = \max \{ \Delta(G_a) \, | \ a \geq \bar{a} \}$.
If $\Delta=2$, then each $G_a$ for $a \geq \bar{a}$ is either a path or a cycle; this implies that $\lim_{a\rightarrow\infty}\rho\!_{_{A_{\alpha}}}\!(G_a)=2$. The same is true if $\Delta(G_a)=2$ for almost all $a\geqslant \bar{a}$.

Thus, to consider the remaining cases,  it is not restrictive to assume that the number of cycles in $\mathcal G$ is finite, and there exists in $\mathcal G$ a sequence $ \mathcal T= \{T_i\}_{i \in \N}$ of pairwise distinct trees such that $\Delta(T_i) \geqslant 3$ and $\rho\!_{_{A_{\alpha}}}\!(T_i)\rightarrow \sigma$, where $2<\sigma< \Psi(\alpha)$.
 By Lemma~\ref{gnam}, we can also assume that $\Delta(T) \leqslant 4$ for all $ T \in \mathcal T$.

If almost all trees in $\mathcal T$ have a vertex of degree $4$,  then, as a consequence of Lemma \ref{gnam2}, we find in $\mathcal T$ a subsequence $\mathcal T'=\{T_{i_n}\}_{n \in \N}$ such that   $T_{i_n}$ contains $K_{1,3}(P_n)$.

Setting $\omega_1(\alpha)= \lim\limits_{n\rightarrow\infty}\rho\!_{_{A_{\alpha}}}\!(K_{1,3}(P_n))$, from Lemma \ref{K13P5Pn}(ii) we immediately get
$$ \Psi(0) < \frac{3}{\sqrt{2}} < \omega_1(0) \leqslant \lim\limits_{n\rightarrow\infty}\rho\!_{_{A_0}}\!(T_{i_n}) = \lim\limits_{a\rightarrow\infty}\rho\!_{_{A_0}}\!(G_a) < \Psi(0),$$
which is clearly a contradiction. This implies that  the maximum degree of almost all trees in $\mathcal T$ is  necessarily $3$.

\begin{re}
Even if it is not really relevant for the proof of Theorem~\ref{Aa-main1}, it is worth noting that  if $\{ G_a\}_{a \in \N}$ has a limit point and, for almost all graphs,  $\Delta(G_a) \geqslant 4$, then
$$ \lim\limits_{a\rightarrow\infty} \rho\!_{_{A_{\alpha}}}\!(G_a)
\geqslant \omega_1(\alpha)> \Psi(\alpha) \quad \text{for all $\alpha \in [0,1)$}. $$
In fact, employing {\em Mathematica\textsuperscript{\textregistered}}, after running the command `{\rm NMaxValue[$\Psi(\alpha)-\omega_1(\alpha)$, $\alpha$]}', we get  $$\max\{\Psi(\alpha)-\omega_1(\alpha)|\alpha \in \mathbb{R}\} = -0.0716+.$$
\end{re}

Since the $A_{\alpha}$-spectral radius of almost all graphs in $\mathcal G$ is less than $\Psi(\alpha)$, by Lemma~\ref{gnam3} it follows that only a finite number of them has more than two vertices of degree $3$.

We now show that if almost all trees in $\mathcal T$ have two vertices of degree $3$, then the distance between them is unbounded.

Assuming the contrary, there exists a suitable $m \in \N$ such that $\mathcal T$ contains a subsequence $\mathcal T'' =
\{ T_{j_n} \}_{n \geqslant m+3}$ with the following property: $T_{j_n}$ contains the tree $T_{m,n}$ obtained from a path $u_1u_2\dots v_{m+2}\dots v_{n}$ and two isolated vertices $v,w$ by joining $v_2$ to $v$ and $v_{m+2}$ to $w$.
By construction, $u_2\dots u_{m+2}$ is an internal path for $T_{m,n}$, therefore Lemma~\ref{alpha-delta}(ii) implies that $ \rho\!_{_{A_{\alpha}}}\!(T_{j_n}) \geqslant \rho\!_{_{A_{\alpha}}}\!(T_{m,n}) > \rho\!_{_{A_{\alpha}}}\!(P_2(P_{n-m},P_{n-m}))$. By taking the limits, and recalling Proposition~\ref{alpha-p2pnpn}, we get $\lim\limits_{a\rightarrow\infty}\rho\!_{_{A_{\alpha}}}\!(G_a) \geqslant \lim\limits_{n\rightarrow\infty}\rho\!_{_{A_{\alpha}}}\!(T_{j_n}) \geqslant \lim\limits_{n\rightarrow\infty}\rho\!_{_{A_{\alpha}}}\!(P_2(P_n,P_n)) = \Psi(\alpha),$
against our assumption.\smallskip

So far, we have proved that only a finite number of trees in $\mathcal T$ has more than two vertices of degree $3$, and if almost all trees in $\mathcal T$ have two vertices of degree $3$, then the distance between them is unbounded. This means that,  by Lemma \ref{alpha-gxy}, the sequence $\mathcal T$  can be possibly replaced with another  sequence of trees $\mathcal T'''= \{ T'''_i \}_{i \in \N }$ such that every $T'''_i$ has only one vertex with degree $\Delta(T''''_i) =3$, and $\lim_{i\rightarrow\infty}\rho\!_{_{A_{\alpha}}}\!(T'''_i)= \lim_{i\rightarrow\infty}\rho\!_{_{A_{\alpha}}}\!(T_i)<\Psi(\alpha)$.

We now show that, for almost all trees in   $\mathcal T'''$, the unique vertex of degree $3$ is adjacent to a pendant vertex. Otherwise, we would find a subsequence
$\{T'''_{j_h}\}_{h \in \N}$ of $\mathcal T'''$ such that each $T'''_{j_h}$ contains $(P_5)_u(P_h)$ defined in and by Proposition~\ref{3.8},
$$ \Psi(0) > \lim\limits_{i\rightarrow\infty}\rho\!_{_{A_0}}\!(T_i) \geqslant  \lim\limits_{h\rightarrow\infty}\rho\!_{_{A_0}}\!(T'''_{j_h}) \geqslant \lim\limits_{h\rightarrow\infty}\rho\!_{_{A_0}}\!((P_5)_u(P_h)) = \Psi(0),$$
a contradiction.

\begin{re} Let $\omega_2(\alpha)=\lim_{n\rightarrow\infty}\rho\!_{_{A_{\alpha}}}\!((P_5)_u(P_n))$ (recall that $u$ is middle vertex of degree 2 in $P_5$). The presence of an arbitrarily big $(P_5)_u(P_h)$ inside almost all $T_i$'s implies that
$\lim_{i\rightarrow\infty}\rho\!_{_{A_{\alpha}}}\!(T_i) \geqslant \omega_2(\alpha) > \Psi(\alpha)$ for each $\alpha \in [0,1)$. In fact, using the command `{\rm FindMinVal\-ue[$g_2(\alpha)-\Psi(\alpha),\{\alpha,0\}$]}', in {\rm Mathematica}\textsuperscript{\textregistered}, we get
$$\max\{\, \omega_2(\alpha)-\Psi(\alpha)|\alpha \in [0,\infty) \, \} = 2.22045\times 10^{-16}>0.$$
\end{re}

We have seen that almost each $T'''_{j_h}$ is a a tree of type $P_2(P_{n_h},P_{m_h})$. In other words, if this is the case, the removal of the unique vertex of degree $3$ gives rise to the disjoint union of the three paths $P_1, P_{n_h}$ and $P_{m_h}$; yet, the two `rays' $P_{n_h}$ and $P_{m_h}$ cannot be both arbitrarily long, otherwise $\Psi(\alpha) > \lim\limits_{i\rightarrow\infty}\rho\!_{_{A_{\alpha}}}\!(T'''_{i}) \geqslant \lim\limits_{n\rightarrow\infty}\rho\!_{_{A_{\alpha}}}\!(P_2(P_n,P_n)) = \Psi(\alpha),$ which is clearly impossible.

From the  discussion above, it follows that the set of limit points we seek are the numbers:
$$\lim_{m\rightarrow\infty}\rho\!_{_{A_{\alpha}}}\!(P_2(P_m,P_n))= \eta_n(\alpha)  \qquad \text{(for  $n \in \N$),}$$
where $P_2(P_m,P_n)$ is the graph depicted in Fig.~\ref{fig0}. Note that $\eta_n(\alpha)>2$, unless $n=1$ and $\alpha=0$.
In fact, fixed $ n \geqslant 2$, $P_2(P_2,P_2)$ is a proper subgraph of $P_2(P_m,P_n)$ for all $m>2$; hence,
\[ \eta_n(\alpha) =  \lim_{m\rightarrow\infty}\rho\!_{_{A_{\alpha}}}\!(P_2(P_m,P_n)) > \rho\!_{_{A_{\alpha}}}\!\left(P_2(P_2,P_2)\right)
\geqslant \rho\!_{_{A_0}}\! \left(P_2(P_2,P_2) \right) = \sqrt{\frac{5+\sqrt{13}}{2}} >2. \]

When $n=1$, by \cite[Proposition~3.6]{WWXB} it follows that $\eta_1(0)=2$, and $\eta_1(\alpha) >2$ for $\alpha>0$ for Lemma~\ref{alpha-delta}(ii).

For the rest of the proof, we assume that $(n,\alpha) \not= (1,0)$. Since  $\eta_n(\alpha)>2$, by Lemma \ref{exist}(i), it follows that $\eta_n(\alpha)$ is the largest root of the equation
\begin{equation}\label{kkk}
\left(1-\alpha h(\lambda)_{\alpha}\right)\phi(P_{n+2})-
\left( \alpha- (2\alpha-1) h(\lambda)_{\alpha} \right)\phi(B_1)\phi(B_n)=0.
\end{equation}
We now set
\begin{equation}\label{Deogratias}
\lambda=(1-\alpha)\theta+\frac{1-\alpha}{\theta}+2\alpha.
\end{equation}
Note that $\lambda>2$. So, $\theta > 0$ and $\theta \neq 1$.

An obvious substitution leads to the identity
\begin{equation}\label{hhh1} h(\lambda)_{\alpha} = \frac{\theta}{2(\alpha+\theta(1-\alpha))(1-\alpha (1-\theta))} \cdot \left( (1-\alpha)\theta + \frac{1-\alpha}{\theta} +2\alpha - \left\lvert \frac{(1-\alpha)(1-\theta^2)}{\theta}
\right\rvert \right). \end{equation}

We next distinguish the following two cases.

{\bf Case 1}. $0 <\theta<1$. Then \eqref{hhh1} is equivalent to
\begin{equation}\label{hhh} h(\lambda)_{\alpha} = \frac{2\theta(\alpha+\theta(1-\alpha))}{2(\alpha+\theta(1-\alpha))(1-\alpha (1-\theta))} =
 \frac{\theta}{1-\alpha (1-\theta)}. \end{equation}
Thus, Equation \eqref{kkk} becomes
\begin{equation}\label{qqq} \frac{1-\alpha}{1-\alpha(1-\theta)} \left(\phi( P_{n+2})-(\alpha (1-\theta)+\theta)\phi(B_1)\phi(B_n) \right)=0.
\end{equation}
By Lemma \ref{gongshi}, \eqref{Deogratias} and \eqref{hhh}, we get
\begin{equation}\label{Pn+2} \scalemath{.85}{ \phi(P_{n+2}) = \frac{\theta (1-a)^{n}}{1-\theta^2} \left( (1-\alpha)^2 \left( \frac{1}{\theta^{n+3}}-\theta^{n+3}\right)+2\alpha(1-\alpha) \left(\frac{1}{\theta^{n+2}}-\theta^{n+2}\right)+\alpha^2 \left(\frac{1}{\theta^{n+1}}-\theta^{n+1}\right) \right)}, \end{equation}
\begin{equation}\label{B1}
\phi(B_1) = (1-\alpha)\theta+\frac{1-\alpha}{\theta}+\alpha,\end{equation}
and
\begin{equation}\label{Bn}
\phi(B_n)= \frac{\theta  (1-a)^{n-1} }{1-\theta^2} \left(  \left( \frac{1}{\theta^{n+1}}-\theta^{n+1}\right)(1-a) + \left(\frac{1}{\theta^{n}}-\theta^{n}\right) a \right).
\end{equation}
When we plug the three expressions above in \eqref{qqq}, such equation becomes
\begin{multline}\label{uhm}  \displaystyle \frac{(1-\alpha)^n}{(1-\theta^2)\theta^{n+2}}\left( (1-\alpha)^2\theta^{2n+4}-(2\alpha^2-2\alpha)\theta^{2n+3}+\alpha^2\theta^{2n+2} \right.\\
\left.-(1-\alpha)^2\theta^4+(2\alpha^2-2\alpha)\theta^3-(2\alpha^2-2\alpha+1)\theta^2+(1-\alpha)^2
\right) =0,
\end{multline}

Once we set $\theta^2=x$, then $0<x <1$, and Equation \eqref{uhm} is equivalent to $\Omega_{n,\alpha}(x)=0$, where
\begin{equation*}
\begin{split}
\Omega_{n,\alpha}(x)&=  \frac{(1-\alpha)^n}{(1-x)x^{\frac{n}{2}+1}}\left( (1-\alpha)^2x^{n+2}+2\alpha (1-\alpha)x^{n+\frac{3}{2}}+\alpha^2x^{n+1}-(1-\alpha)^2x^2-2\alpha(1-\alpha)x^{\frac{3}{2}}\right.\\
&\phantom{==}-(2\alpha^2-2\alpha+1)x+(1-\alpha)^2\Big)\\
&=\frac{(1-\alpha)^n}{(1-x)x^{\frac{n}{2}+1}}(x-1)\Phi_{n,\alpha}(x)\\
&= -\frac{(1-\alpha)^n}{x^{\frac{n}{2}+1}}\Phi_{n,\alpha}(x),
\end{split}
\end{equation*}
with $\Phi_{n,\alpha}(x)$ being in the statement of Theorem~\ref{Aa-main1}.  Therefore, we only consider the positive root of $\Phi_{n,\alpha}(x) = 0$. It is easily verified that, for $(n, \alpha) \not=(1,0)$,
$$\Phi_{n,\alpha}(0)=-(1-\alpha)^2<0, \quad
\Phi_{n,\alpha}(1)= 2\alpha(1-\alpha)n +(1-\alpha^2)(n-1)+\alpha^2>0,$$
and
$$ \frac{\mathrm{d} \Phi_{n,\alpha}(x)}{\mathrm{d}x}>0 \qquad \text{for $x>0$.}$$
Hence, $\gamma_n(\alpha)$ is the {\em only} positive root of \eqref{PHI} and satisfies \eqref{eq1} as claimed.

Theorem \ref{Aa-main1} also holds for $(n, \alpha) =(1,0)$.
In fact, $\gamma_1(0)=1$ is the only positive root  of both  $\Omega_{1,0}(x)= (x-1)^2(x+1)$ and $\Phi_{1,0}(x)= x^2-1$.

From  Proposition~\ref{alpha-p2pnpn}  it follows that $\lim_{n\rightarrow\infty}\eta_n(\alpha)=\Psi(\alpha)$.

Since $P_2(P_m,P_n)$ is a subgraph of $P_2(P_m,P_{n+1})$, by
Lemma~\ref{alpha-delta}(ii)  we obtain
\begin{equation}\label{basta}
\eta_1(\alpha) \leqslant \eta_2(\alpha) \leqslant \cdots \leqslant \eta_n(\alpha) \leqslant \eta_{n+1}(\alpha) \leqslant \cdots \qquad \text{ for $\alpha \in [0,1)$.}
\end{equation}

The proof will be over once we prove that all inequalities in \eqref{basta} are strict.

By contradiction, suppose that $ \eta_{\bar{n}}(\bar{\alpha})$ is equal  to $ \eta_{\bar{n}+1}(\bar{\alpha})$  for a suitable pair $(\bar{n},\bar{\alpha})$. Since the function $\lambda=\lambda (\theta)$ in \eqref{Deogratias} is strictly decreasing in the interval $(0,1)$, we also have
$ \gamma_{\bar{n}}(\bar{\alpha}) = \gamma_{\bar{n}+1}(\bar{\alpha})$.

From \eqref{PHI} it comes out that the number \begin{equation*}
f(x,\alpha)=(x^2-2x^{\frac{3}{2}}+2x-1)\alpha^2+2(1-x+x^{\frac{3}{2}}-x^2)\alpha+x^2+x-1
\end{equation*}
computes the difference $\Phi_{n+1, \alpha}(x)-x\Phi_{n, \alpha}(x)$ {\em whatever $n$ we choose in $\N$}.
Under our assumptions,
$$f(\gamma_{\bar{n}}(\bar{\alpha}) ,\alpha)=\Phi_{\bar{n}+1, \alpha}(\gamma_{\bar{n}}(\bar{\alpha}) )-\gamma_{\bar{n}}(\bar{\alpha}) \Phi_{\bar{n}, \alpha}(\gamma_{\bar{n}}(\bar{\alpha}) )=0 -  \gamma_{\bar{n}}(\bar{\alpha}) \cdot 0         =0.$$
Through an obvious iterative argument, we see that the non-zero number $\gamma_{\bar{n}}(\bar{\alpha})$ should be a root of
$\Phi_{n, \bar{\alpha}}(x)$ for all $n \in \N$. But this is not possible, since the difference
$$ \Phi_{2, \bar{\alpha}}(x) - \Phi_{1, \bar{\alpha}}(x) = - x^2  \left( (1-\bar{\alpha})^2 x +2\bar{\alpha}(1-\bar{\alpha})\sqrt{x} -\bar{\alpha}^2 \right)$$
is negative for all $x \in (0,1)$.

Consequently, we have shown Theorem \ref{Aa-main1}.\medskip

{\bf Case 2}. $\theta>1$.  In this case, \eqref{hhh} and \eqref{qqq} should be replaced by
$$ h(\lambda) = \frac{1}{\theta-\alpha(\theta-1)}, \quad \text{and} \quad \frac{1-\alpha}{\theta-\alpha(\theta-1)} \left(\theta\phi( P_{n+2})-(1+\alpha (\theta-1))\phi(B_1)\phi(B_n) \right)=0,
 $$
whereas the expression for $\phi(P_{n+2})$, $\phi(B_1)$ and $\phi(B_n)$ would remain formally identical to \eqref{Pn+2}, \eqref{B1} and \eqref{Bn}. After some calculations, \eqref{qqq} becomes
\begin{multline*}
\frac{(1-\alpha)^n}{(x-1)x^{\frac{n}{2}+1}}\left( (1-\alpha)^2x^{n+2}-(2\alpha^2-2\alpha+1)x^{n+1}+(2\alpha^2-2\alpha)x^{n+\frac{1}{2}} -(1-\alpha)^2x^{n} \right.\\
\left. +\alpha^2x-(2\alpha^2-2\alpha)x^{\frac{1}{2}} +(1-\alpha)^2
\right) =0.
\end{multline*}
Set $\theta^2=x$. Then $x>1$, and the above equation is equivalent to  $\widetilde\Omega_{n,\alpha}(x)=0$, where
\begin{equation*}
\begin{split}
\widetilde{\Omega}_{n,\alpha}(x)&=  \frac{(1-\alpha)^n}{(x-1)x^{\frac{n}{2}+1}}\left( (1-\alpha)^2x^{n+2}-(2\alpha^2-2\alpha+1)x^{n+1}+(2\alpha^2-2\alpha)x^{n+\frac{1}{2}} -(1-\alpha)^2x^{n} \right.\\
&\phantom{==}+\alpha^2x-(2\alpha^2-2\alpha)x^{\frac{1}{2}} +(1-\alpha)^2\Big).\\
&=\frac{(1-\alpha)^n}{x^{\frac{n}{2}+1}}\widetilde{\Phi}_{n,\alpha}(x)
\end{split}
\end{equation*}
with $\widetilde{\Phi}_{n,\alpha}(x)$ being  as in the statement of Theorem~\ref{Aa-main2}. A direct calculation leads to
\begin{equation}\label{equi}
\Omega_{n,\alpha}(x) = x^{n+2} \widetilde{\Omega}_{n,\alpha}\left(\frac{1}{x} \right) \qquad \mbox{and} \qquad \Phi_{n,\alpha}(x) = -x^{n+1} \widetilde{\Phi}_{n,\alpha} \left(\frac{1}{x} \right).
\end{equation}
From Case 1, it follows that $\widetilde{\gamma}_n(\alpha) = \frac{1}{\gamma_n(\alpha)}$ is the only positive root greater than 1 of $\widetilde{\Phi}_{n,\alpha}(x)$. Thereby, \begin{equation*}
\begin{split}
\eta_n(\alpha) &= 2\alpha+(1-\alpha)(\gamma_n(\alpha))^{\frac{1}{2}} + (1-\alpha)(\gamma_n(\alpha))^{-\frac{1}{2}}\\
               &= 2\alpha+(1-\alpha)(\widetilde{\gamma}_n(\alpha))^{\frac{1}{2}} + (1-\alpha)(\widetilde{\gamma}_n(\alpha))^{-\frac{1}{2}}.
\end{split}
\end{equation*}
What is left to prove for Theorem \ref{Aa-main2} also comes from Case 1, and the proofs of Theorem \ref{Aa-main1} and Theorem \ref{Aa-main2}.

\section{(Signless) Laplacian matrix}
 Inspired by Hoffman's theorem, Guo \cite{guo-lim} and Wang et al. \cite{wang-lim} respectively determined the Laplacian and signless Laplacian limit points smaller than $2+\omega+\omega^{-1}$ and $2+\varepsilon$, where $\omega =
\frac{{\;}1{\;}}{3}\left((19 + 3\sqrt{33})^{\frac{{\;}1{\;}}{3}} + (19 - 3\sqrt{33})^{\frac{{\;} 1{\;}}{3}}+1 \right)$ and
$\varepsilon=\frac{{\;}1{\;}}{3}\left((54 - 6\sqrt{33})^{\frac{{\;}1{\;}}{3}} + (54 + 6\sqrt{33})^{\frac{{\;} 1{\;}}{3}} \right)$. Note that $\varepsilon=\omega + \omega^{-1} =2.38+$, and the related proofs of \cite[Theorem~3.5]{guo-lim} and \cite[Theorem~3.1]{wang-lim}  only involve trees. It is well-known that  the Laplacian and signless Laplacian spectra of bipartite graphs are equal \cite{gro-mer-sun}. Thus, we can state the cited results of \cite{guo-lim} and  \cite{wang-lim} in a proposition.

\begin{prop}{\rm \cite{guo-lim,wang-lim}}\label{QL-limit}
Let $\mu_0 = 1$ and $\mu_n $$(n \geq 1)$ be the largest positive root of $$f_n(x) = x^{n+1}-(1+x+\cdots+x^{n-1})(\sqrt{x}+1)^2.$$ Let
$\kappa_n = 2+ \mu_n^{\frac{1}{2}} + \mu_n^{\frac{-1}{2}}$. Then,
$$4 = \kappa_0 < \kappa_1 < \kappa_2 < \cdots$$ are all the limit points
of $L$-spectral radius or $Q$-spectral radius of graphs less than {\small $\lim\limits_{n\rightarrow\infty}\kappa_n=2+\varepsilon$},
where $\varepsilon = \frac{{\;}1{\;}}{3}\left((54 - 6\sqrt{33})^{\frac{{\;}1{\;}}{3}} + (54 + 6\sqrt{33})^{\frac{{\;} 1{\;}}{3}} \right) = 2.38+$.
\end{prop}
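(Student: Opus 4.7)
The plan is to derive Proposition~\ref{QL-limit} from Theorem~\ref{Aa-main2} specialized at $\alpha = 1/2$, together with two standard facts already noted in the excerpt: first, $Q(G) = 2A_{1/2}(G)$, so $\rho_{_{Q}}(G) = 2\rho\!_{_{A_{1/2}}}\!(G)$ and hence the $Q$-limit points of graphs are exactly twice the $A_{1/2}$-limit points; second, for bipartite graphs (in particular for trees) the $L$- and $Q$-spectra coincide \cite{gro-mer-sun}. Since the proofs of \cite[Theorem~3.5]{guo-lim} and \cite[Theorem~3.1]{wang-lim} only involve trees, any sequence of graphs whose $L$- or $Q$-spectral radius tends to a value strictly less than $2+\varepsilon$ eventually consists of trees; thus the sets of $L$- and $Q$-limit points below $2+\varepsilon$ coincide, and both equal twice the $A_{1/2}$-limit points of trees below $\Psi(1/2)$.

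Next I would apply Theorem~\ref{Aa-main2} at $\alpha = 1/2$, which identifies the $A_{1/2}$-limit points strictly below $\Psi(1/2)$ as the numbers $\eta_n(1/2) = 1 + \tfrac{1}{2}\widetilde{\gamma}_n(1/2)^{1/2} + \tfrac{1}{2}\widetilde{\gamma}_n(1/2)^{-1/2}$, where $\widetilde{\gamma}_n(1/2) \in (1,\infty)$ is the unique positive root of $\widetilde{\Phi}_{n,1/2}$ lying in that interval. Doubling gives
$$2\eta_n(1/2) = 2 + \widetilde{\gamma}_n(1/2)^{1/2} + \widetilde{\gamma}_n(1/2)^{-1/2},$$
which already has the exact shape of $\kappa_n = 2 + \mu_n^{1/2} + \mu_n^{-1/2}$. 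The decisive algebraic step is then to verify the polynomial identity $f_n(x) = 4\,\widetilde{\Phi}_{n,1/2}(x)$: expanding $(\sqrt{x}+1)^2 = x + 2\sqrt{x}+1$, multiplying by $(1+x+\cdots+x^{n-1})$, and collecting the integer- and half-integer-power contributions (with a reindexing $i \mapsto n-i$ in the half-integer sum) produces exactly the coefficients in $\widetilde{\Phi}_{n,1/2}$. Once this is in hand, $\mu_n = \widetilde{\gamma}_n(1/2)$, and consequently $\kappa_n = 2\eta_n(1/2)$ for every $n \geqslant 0$ (the base case $\mu_0 = \widetilde{\gamma}_0(1/2) = 1$ giving $\kappa_0 = 4$). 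Strict monotonicity $4 = \kappa_0 < \kappa_1 < \cdots$ is then inherited from that of $\{\eta_n(1/2)\}$ proved in Theorem~\ref{Aa-main1}, and $\lim_{n\to\infty}\kappa_n = 2\Psi(1/2)$ is immediate from Proposition~\ref{alpha-p2pnpn}.

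The last task is to evaluate $\Psi(1/2)$ in closed form and confirm $2\Psi(1/2) = 2+\varepsilon$; this I would do by substituting $\alpha = 1/2$ into \eqref{PSI} and simplifying, most conveniently with {\em Mathematica\textsuperscript{\textregistered}}. The main obstacle I anticipate is precisely this step: the expression \eqref{PSI} involves principal branches of cube roots of non-real complex numbers (exactly as the excerpt illustrates for $\alpha = 0$), so one must check that the imaginary contributions cancel and that the remaining real part collapses to the Cardano-type expression $1 + \varepsilon/2$. By comparison, the reduction to trees is supplied by the cited papers of Guo and Wang, and the polynomial identity $f_n = 4\widetilde{\Phi}_{n,1/2}$ is a routine coefficient comparison.
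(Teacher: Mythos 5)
Your proposal is correct and takes essentially the same route as the paper: the paper presents Proposition~\ref{QL-limit} as the merged Guo/Wang results (justified, exactly as you do, by the tree-only nature of their proofs and the equality of $L$- and $Q$-spectra for bipartite graphs) and explicitly observes that it is the $\alpha=\tfrac12$ corollary of Theorem~\ref{Aa-main2}, with your identity $f_n(x)=4\widetilde{\Phi}_{n,1/2}(x)$ being the Version-II analogue of the paper's check $\varphi_n(x)=4\Phi_{n}\left(x,\tfrac12\right)$ and the Mathematica evaluation $2\Psi\!\left(\tfrac12\right)=2+\varepsilon$ in the proof of Theorem~\ref{ult}. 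The only points you lean on externally (the Laplacian reduction to trees via the cited proofs, and the closed-form evaluation of $\Psi\!\left(\tfrac12\right)$ by software) are accepted in the same way by the paper itself, so there is no gap relative to its argument.
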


The proofs of Theorems \ref{Aa-main1} and \ref{Aa-main2} are also essentially limited to trees. Therefore, on the one side, Proposition \ref{QL-limit} can be seen as corollary of Theorem \ref{Aa-main2} obtained by evaluating $\alpha$ at $ \frac{1}{2}$; on the other side,  we retrieve from  Theorem \ref{Aa-main1} an alternative statement concerning the limit points of (signless) Laplacian spectral radius of graphs.

\begin{thm}\label{ult}
Let $\vartheta_0=1$, $\vartheta_1$ be the only positive root of
\[  \varphi(x) =  x^2 +2 x^{\frac{3}{2}}+x-1, \]
and, for $n \geqslant 2$,
 $\vartheta_n$ be the smallest positive root of
$$\varphi_n(x)=x^{n+1}+2 x^{\frac{3}{2}}+2\sum_{i=0}^{n-2}x^{i+2}+x-1.$$
Let $\xi_n=2+\vartheta_n^{\frac{\;1}{2}}+\vartheta_n^{-\frac{\;1}{2}}$. Then,
$$4=\xi_0<\xi_1<\xi_2<\cdots$$
are all the limit points of (signless) Laplacian spectral radius of graphs smaller than $$\lim\limits_{n\rightarrow\infty}\xi_n=2+\varepsilon, \qquad \text{where} \quad  \varepsilon=\frac{{\;}1{\;}}{3}\left((54 - 6\sqrt{33})^{\frac{{\;}1{\;}}{3}} + (54 + 6\sqrt{33})^{\frac{{\;} 1{\;}}{3}} \right).$$
\end{thm}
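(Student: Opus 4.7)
The plan is to derive Theorem~\ref{ult} as a direct corollary of Theorem~\ref{Aa-main1} specialized at $\alpha = \tfrac{1}{2}$. The crucial algebraic identity is
\[ A_{1/2}(G) = \tfrac{1}{2}\bigl(D(G) + A(G)\bigr) = \tfrac{1}{2} Q(G), \]
so $\rho\!_{_{A_{1/2}}}\!(G) = \tfrac{1}{2}\rho\!_{_Q}\!(G)$, and every limit point of the $A_{1/2}$-spectral radius corresponds by doubling to exactly one limit point of the $Q$-spectral radius. Inspecting the proof of Theorem~\ref{Aa-main1} (via Lemma~\ref{no-TC}, Lemma~\ref{gnam3} and the reduction immediately preceding Proposition~\ref{alpha-p2pnpn}), every sequence of connected graphs whose $A_\alpha$-spectral radius converges to a value strictly smaller than $\Psi(\alpha)$ is eventually made of trees, plus at most finitely many cycles which only contribute the trivial limit value $\eta_0(\alpha)=2$. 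Since trees are bipartite and $\mathrm{spec}\, L(G) = \mathrm{spec}\, Q(G)$ for every bipartite graph, the sets of limit points of $\rho\!_{_L}$ and of $\rho\!_{_Q}$ that lie below $2\Psi(1/2)$ are identical; the theorem can therefore be stated indifferently for either matrix.

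The numerical translation is immediate. Setting $\alpha = \tfrac{1}{2}$ in \eqref{eq1} yields
\[ \eta_n\!\left(\tfrac{1}{2}\right) = 1 + \tfrac{1}{2}\!\left(\gamma_n\!\left(\tfrac{1}{2}\right)^{1/2} + \gamma_n\!\left(\tfrac{1}{2}\right)^{-1/2}\right), \]
so that $\xi_n = 2\,\eta_n(1/2) = 2 + \vartheta_n^{1/2} + \vartheta_n^{-1/2}$ after identifying $\vartheta_n := \gamma_n(1/2)$. The strict monotonicity $\xi_0 < \xi_1 < \xi_2 < \cdots$ is then inherited from the analogous chain already proved in Theorem~\ref{Aa-main1}, and $\lim_n \xi_n = 2\Psi(1/2)$.

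To pin down the explicit polynomial for $\vartheta_n$, I would substitute $\alpha = \tfrac{1}{2}$ into $\Phi_{n,\alpha}(x)$ of \eqref{PHI} and multiply through by $4$ to clear denominators. Since $(1-\alpha)^2 = \alpha^2 = \tfrac{1}{4}$, $2\alpha(1-\alpha) = \tfrac{1}{2}$ and $1-2\alpha+2\alpha^2 = \tfrac{1}{2}$ at $\alpha = \tfrac{1}{2}$, the resulting polynomial, after collecting terms, is exactly the $\varphi_n(x)$ of the statement (and, for $n=1$, reduces to $\varphi(x) = x^2 + 2x^{3/2} + x - 1$). The inequalities $\varphi_n(0) = -1 < 0$ and $\varphi_n(1) > 0$ together with $\varphi_n'(x) > 0$ for $x > 0$ guarantee existence and uniqueness of a root in $(0,1)$; this root is automatically the \emph{smallest} positive root of $\varphi_n$, and therefore coincides with $\vartheta_n$.

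I expect the only non-routine step to be the algebraic identification $\Psi(1/2) = 1 + \tfrac{\varepsilon}{2}$ via the closed form \eqref{PSI}, which requires simplifying the nested cube roots $g_3(1/2)$, $g_4(1/2)$, $g_5(1/2)$ down to the radical $\varepsilon$. I would delegate this tedious but mechanical step to \emph{Mathematica}\textsuperscript{\textregistered}, as the paper already does for analogous simplifications, or---more economically---bypass it by appealing to the already established Proposition~\ref{QL-limit}: the two characterizations must agree on the supremum of their respective limit-point sequences, hence $\lim_n \xi_n = 2 + \varepsilon$ comes for free.
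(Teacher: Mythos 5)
Your proposal follows essentially the same route as the paper's own proof: specialize Theorem~\ref{Aa-main1} at $\alpha=\tfrac12$, use $Q(G)=2A_{1/2}(G)$ together with the fact that the relevant extremal graphs are trees (hence bipartite, so $L$ and $Q$ have the same spectrum) to cover both matrices, identify $\vartheta_n=\gamma_n\!\left(\tfrac12\right)$, $\xi_n=2\eta_n\!\left(\tfrac12\right)$ and $\varphi_n(x)=4\Phi_{n,1/2}(x)$, and obtain $2\Psi\!\left(\tfrac12\right)=2+\varepsilon$ either by \emph{Mathematica} or by matching with Proposition~\ref{QL-limit}, exactly as the paper does. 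One cosmetic remark: for $n\geqslant 2$ the substitution $\alpha=\tfrac12$ actually yields \emph{all} the half-integer terms $2\sum_{i=0}^{n-1}x^{\,n-i+\frac12}$, i.e.\ the polynomial appearing in the paper's identity $\varphi_n(x)=4\Phi_{n,1/2}(x)$ (equivalently $-x^{n+1}f_n(1/x)$ with $f_n$ as in Proposition~\ref{QL-limit}), whereas the printed statement of $\varphi_n$ retains only the term $2x^{3/2}$ --- a typo in the statement rather than a gap in your argument, so your claim that the substitution gives ``exactly the $\varphi_n$ of the statement'' should be read with that correction in mind.
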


\begin{proof}
Recall that for every bipartite graph $G$, $L(G)=Q(G)=2A_{1/2}(G)$.
From Theorem \ref{Aa-main1} and a direct calculation we get $$\varphi_n(x)=4\Phi_n\left(x,\frac{\;1}{2}\right), \quad \vartheta_n = \gamma_n \left( \frac{1}{2} \right) \quad \mbox{and} \quad \xi_n=2\eta_n \left(\frac{\;1}{2} \right).$$
By evaluating \eqref{PSI} at $\alpha=1/2$, the software {\em Mathematica\textsuperscript{\textregistered}} gives  $2\Psi \left( \frac{1}{2} \right) = 2+\varepsilon$.
In order to verify that Theorem~\ref{ult} is consistent with Theorem \ref{QL-limit},
we just check that
$$\varphi_n(x) = x^{n+1}\chi_n\left(\frac{1}{x}\right).$$
Thus, $\vartheta_n=1/\mu_n$ and, consequently, $\xi_n=\kappa_n$.
\end{proof}

\section{Concluding remarks}
About fifty years after its publication, the paper \cite{hoffman} is still inspiring people working in Spectral Graph Theory. The work presented in this paper not only provides a new version but also presents two generalized results of Hoffman's theorem about the limit points of adjacency spectral radius of graphs. In contrast, we take advantage of the software {\em Mathematica\textsuperscript{\textregistered}}, which plays an influential role in Section 3 and along the proofs of Theorems \ref{Aa-main1} and \ref{Aa-main2}. After \cite{WWXB}, the main results in the paper can be seen as a second step towards the more general problem of determining all the limit points of $A_{\alpha}$-spectral radius of graphs.

Very recently, to estimate the maximum cardinality of equiangular lines in the $n$-dimensional Euclidean space, Jiang and Polyanskii \cite{jiang-poly} applied Hoffman's theorem and the related results in \cite{BN,CDG,she} to give a forbidden subgraphs characterization of graphs with bounded adjacency spectral radius. This is a novel application. To get further results in the same vein, it could be important to solve Problem~1 and prove or disprove Conjecture~1 below.\\

\noindent
{\bf Problem 1}.
Characterize all the connected graphs with $A_\alpha$-spectral radius between $2$ and $\Psi(\alpha)$.\\

\noindent
{\bf Conjecture 1}
Let $\alpha \in [0,1)$. For any $\Upsilon(\alpha) \geq \Psi(\alpha) $, there exists a sequence of graphs $\{G_i\}_{i\in \N}$  such that $\lim\limits_{i\rightarrow\infty}\rho_{_{A_\alpha}}(G_i) = \Upsilon(\alpha)$.

\medskip\medskip


\noindent{\bf Acknowledgments.}
The first  author is supported for this research by the National Natural Science Foundation of China (No. 11971274).

\small{

}

\end{document}